\theoremstyle{plain}
\newtheorem{theorem}{Theorem}[section]
\newtheorem{proposition}[theorem]{Proposition}
\newtheorem{corollary}[theorem]{Corollary}
\newtheorem{lemma}[theorem]{Lemma}
\newtheorem{conjecture}[theorem]{Conjecture}
\newtheorem{congruence}[theorem]{Congruence}
\newtheorem*{theorem*}{Theorem}
\newtheorem*{proposition*}{Proposition}
\newtheorem*{corollary*}{Corollary}
\newtheorem*{lemma*}{Lemma}
\newtheorem*{conjecture*}{Conjecture}
\theoremstyle{definition}
\newtheorem{definition}[theorem]{Definition}
\newtheorem*{definition*}{Definition}
\newtheorem*{example*}{Example}
\newtheorem*{question*}{Question}
\newtheorem*{philosophy*}{Philosophy}
\theoremstyle{remark}
\newtheorem{remark}[theorem]{Remark}
\newtheorem*{remark*}{Remark}
\def\ape{Ap\'{e}ry}
\def\hp{h_{p}}
\def\p{\mathbf{p}}
\def\Ai{\Q_{p\to\infty}}
\def\ma{\cM}
\def\mf{\fil}
\def\fil{\mathrm{Fil}}
\def\p{\mathbf{p}}
\def\bu{\underline{\mathbf{u}}}
\def\mhs{\cM}
\def\mzv{multiple zeta value}
\def\ppc{supercongruence}
\begin{document}
\title{The MHS algebra and supercongruences}
\author{Julian Rosen}
\email{julianrosen@gmail.com}
\date{\today}
\maketitle

%

\begin{abstract}
A supercongruence is a congruence between rational numbers modulo a power of a prime. In this paper, we give a technique for finding and algorithmically proving supercongruences by expressing terms as infinite series involving certain generalizations of the harmonic numbers. We apply the technique to derive many new supercongruences. We also provide software for finding and proving supercongruences using our technique.
\end{abstract}

\setcounter{tocdepth}{1}
\tableofcontents

\section{Introduction}
\subsection{Supercongruences}

If $x$ and $y$ are rational numbers and $k$ is a positive integer, we write
$
x\equiv y\mod k
$
if the numerator of $x-y$ is divisible by $k$. A \emph{\ppc{} family} (or just  \emph{supercongruence}) is a statement of the form
\begin{equation}
\label{defppc}
a_p\equiv b_p\mod p^n\gap\text{for all but finitely many primes $p$},
\end{equation}
where $a_p$ and $b_p$ are rational (or $p$-adic) numbers depending on $p$ and $n$ is a positive\footnote{Traditionally \eqref{defppc} is called a supercongruence only when $n$ is at least $2$. We also allow $n=1$ for maximal generality} integer.

%


\subsubsection{Multiple harmonic sums}
\label{intromhs}
A \emph{composition} is a finite ordered list of positive integers. For $N$ a positive integer and $\bs=(s_1,\ldots,s_k)$ composition, the quantity
\[
H_N(s_1,\ldots,s_k):=\sum_{N\geq n_1>\ldots>n_k\geq 1}\frac{1}{n_1^{s_1}\ldots n_k^{s_k}}\in\Q,
\]
is called a \emph{multiple harmonic sum} (\emph{MHS}). We call $|\bs|:=s_1+\ldots,s_k$ the \emph{weight} and $k$ the \emph{depth}. Multiple harmonic sums are known to satisfy many \ppc{}s. The case $N=p-1$, with $p$ a prime, is particularly rich. For instance, if $n$ is a positive integer, then $\H(n)\equiv 0\mod p$, and if $n$ is odd then in fact $\H(n)\equiv 0\mod p^2$. Another example is the reversal congruence, which appeared in \cite{Hof04a}:
\[
\H(s_1,\ldots,s_k)\equiv (-1)^{s_1+\ldots+s_k}\H(s_k,\ldots,s_1)\mod p.
\]

Multiple harmonic sums are sometimes viewed as basic units into which more complicated congruences can be decomposed. As an example, consider the $p$-th central binomial coefficient ${2p\choose p}$, for which several \ppc{}s are known. We have
\begin{align}
{2p\choose p}&=2\frac{(p+1)\cdot (p+2)\cdots(p+(p-1))}{1\cdot2\cdots (p-1)}\\
&=2\lp 1+\frac{p}{1}\rp\lp1+\frac{p}{2}\rp\cdots\lp 1+\frac{p}{p-1}\rp\\
&=2\sum_{n=0}^{p-1}p^n\H(1^n).\label{binex}\\
\end{align}
Here we have used the shorthand notation
\[
H_N(1^n):=H_N(\underbrace{1,\ldots,1}_n).
\]
So, for instance, the well-known Wolstenholme congruence
\[
{2p\choose p}\equiv 2\mod p^3
\]
is equivalent to the multiple harmonic sum supercongruence \[
p\H(1)+p^2\H(1,1)\equiv 0\mod p^3.
\]
In Sections \ref{secred}, \ref{secmix}, and \ref{sectec}, we compute expansions similar to \eqref{binex}, expressing many other quantities in terms of multiple harmonic sums.

\begin{remark}
The limits of multiple harmonic sums $H_N(\bs)$ as $N\to\infty$, when they exist, are called \emph{multiple zeta values}. The multiple zeta values are examples of periods (definite integrals of rational functions with rational coefficients over rationally-defined regions), and the theory of motives predicts periods should satisfy a version of Galois theory (see \cite{And09}). In \cite{Ros16b}, the author uses the Galois theory of multiple zeta values to construct a Galois theory of \ppc{}s.
\end{remark}


\subsection{Results}
The main result of this paper is a technique for discovering and algorithmically proving \ppc{}s by expanding terms into $p$-adically convergent series involving the multiple harmonic sums $\H(\bs)$. We apply our technique to derive many new results, including congruences for binomial coefficients (Congruence \ref{cb}), the \ape{} numbers (Congruences \ref{ca1}, \ref{cz1}, \ref{cz2}), Bernoulli numbers and $p$-adic zeta values (Congruences \ref{cz1} and \ref{cz2}), alternating harmonic numbers (Congruence \ref{congalt}), some sums involving harmonic numbers (Congruences \ref{cs1}, \ref{cs2}), $p$-restricted harmonic numbers (Congruences \ref{cr1}, \ref{cr3}), and certain `curious' generalized harmonic numbers (Congruence \ref{cc}). A summary of the technique is given in Section \ref{ssmix}.

We construct a commutative $\Q$-algebra $\mhs$, called the MHS algebra, consisting of prime-indexed sequences admitting an expansion of a certain form in terms of multiple harmonic sums. We give an algorithm for proving \ppc{}s between elements of the MHS algebra.
%
We also provide software for finding and proving \ppc{} using this algorithm.
A description of the software is given in Appendix \ref{appsoft}. The algorithm produces unconditional proofs of supercongruences, and the truth of Conjecture \ref{conjds} would imply the algorithm is in fact a decision procedure for \ppc{}s between elements of the MHS algebra.

\subsection{Outline}
In Sections \ref{secred} and \ref{secmix}, we prove many new \ppc{}s using expansions in terms of multiple harmonic sums. In Section \ref{ssmix} we summarize our technique for proving \ppc{}s using these expansion. In Section \ref{secalg} we define the MHS algebra, which is the class of quantities for which our technique is algorithmically applicable.

In Section \ref{sectec} we compute a number of additional expansions in terms of multiple harmonic sums, which allow us to algorithmically prove many additional \ppc{}s.

In Section \ref{secmpl} we discuss a generalization to truncated multiple polylogarithms.

We provide software implementing the technique of this paper, which we describe in Appendix \ref{appsoft}. Appendix \ref{appdec} contains a description of a family of \ppc{}s for multiple harmonic sums, which appared recently in \cite{Jar16}.

\subsection{Recent related work}
Recent work of Jarossay \cite{Jar16} gives several series identities for multiple harmonic sums $\H(\bs)$, which are used in this paper. Additionally, \cite{Jar16,Jar16a,Jar16b} establish a relationship between multiple harmonic sums and $p$-adic \mzv{}s. In \cite{Ros16b} the author uses this relationship to construct a Galois theory of \ppc{}s for the MHS algebra.

Several recent works involve computer algorithms for proving congruences. Rowland and Yassawi \cite{Row13} given an automatic method for proving congruences for diagonal coefficients of multi-variate rational power series. The results of \cite{Row13} are generalized by Rowland and Zeilberger in \cite{Row14}. A very recent paper of Chen, Hou, and Zeilberger \cite{Che16} gives an algorithm for proving congruences modulo $p$ for certain power series coefficients.

The present work involves infinite, $p$-adically convergent series identities involving multiple harmonic sums, and in Section \ref{secmpl} we look at related series involving truncated multiple polylogarithms. A recent work of Seki \cite{Sek16} investigates a closely related $p$-adic series identity for truncated multiple polylogarithms.

%
%
\section{Congruences for multiple harmonic sums}
\label{secmhs}
In this section, we describe a class of \ppc{}s for multiple harmonic sums. There is an algorithm for proving \ppc{}s in this class, which is conjecturally a decision procedure.

\subsection{Stuffle product}
It is well-known that the product of two multiple harmonic sums with the same bound of summation can be written as a sum of multiple harmonic sums with the same bound. This is best illustrated with an example:
\begin{align}
H_N(2)H_N(3)&=\sum_{n=1}^N\sum_{m=1}^N\frac{1}{n^2m^3}\\
&=\lp\sum_{n>m}+\sum_{n<m}+\sum_{n=m}\,\rp\frac{1}{n^2m^3}\\
&=H_N(2,3)+H_N(3,2)+H_N(5).\label{stuff}
\end{align}
The expression on the right is called a \emph{stuffle product} of $H_N(2)$ and $H_N(3)$, which is a combination of shuffling and stuffing (the $2$ and $3$ are \emph{shuffled} to give $(2,3)$ and $(3,2)$, and \emph{stuffed} to give $(5)$).

\subsection{Weighted congruences}
In Section \ref{intromhs}, we saw the congruences
\begin{gather}
\H(n)\equiv 0\mod p,\text{ $n$ even},\\
\H(n)\equiv 0\mod p^2,\text{ $n$ odd},\\
\H(s_1,\ldots,s_k)\equiv (-1)^{s_1+\ldots+s_k}\H(s_k,\ldots,s_1)\mod p.
\end{gather}
These congruences are \emph{homogeneous}, in that each involves only compositions of a single weight.
There are also inhomogeneous congruences, for instance
\[
\H(1)\equiv \frac{1}{3}p^2\H(2,1)\mod p^4,
\]
or the more complex
\[
33 \H(2)\equiv 22 p\H(2,1)+2p^2\H(2,1,1)\mod p^4.
\]
It these examples, terms $\H(\bs)$ are multiplied by an explicit power of $p$, which increase with the weight of $\bs$. This is typical for \ppc{}s involving $\H$. It is convenient to multiply through by a power of $p$ so that each term has the form $p^{|\bs|}\H(\bs)$.

\begin{definition}(\cite{Ros13})
For each composition $\bs$, we define the \emph{weighted multiple harmonic sum}
\[
\hp(\bs):=p^{|\bs|}\H(\bs).
\]
A \emph{weighted MHS congruence} (or just \emph{weighted congruence}) is a statement of the form
\begin{equation}
\label{defwc}
\sum_{i=1}^m \alpha_i \hp(\bs_i)\equiv 0\mod p^n\gap\text{ for all but finitely many }p,
\end{equation}
 where $n\in\Z_{>0}$, the coefficients $\alpha_1,\ldots,\alpha_m\in\Q$, and the compositions $\bs_1,\ldots,\bs_m$ are independent of $p$.
\end{definition}

Note that $\hp(\bs)\equiv 0\mod p^{|\bs|}$.

\subsection{Extension} Frequently weighted congruences can be strengthened. For example the now-well-known congruence $\hp(1,1)\equiv 0\mod p^3$ can be strengthened to $3\hp(1,1)+\hp(2,1)\equiv 0\mod p^4$, and further to $3\hp(1,1)+\hp(2,1)+\hp(3,1)\equiv 0\mod p^5$. These congruences arise from a convergent $p$-adic series identity
\begin{equation}
\label{inf}
3\hp(1,1)+\sum_{n\geq 2}\hp(n,1)=0.
\end{equation}
Series identities like \eqref{inf} are ubiquitous. Another example, which is not difficult to check, is
\begin{equation}
\label{inf2}
2\hp(1)+\sum_{n\geq 2}\hp(n)=0,
\end{equation}
from which follows a family of weighted congruences:
\begin{align*}
2\hp(1)&\equiv 0\mod p^2\\
2\hp(1)+\hp(2)&\equiv 0\mod p^3\\
2\hp(1)+\hp(2)+\hp(3)&\equiv 0\mod p^4\\
&\vdots
\end{align*}
In \cite{Ros13}, the author gave two families of $p$-adic series identities like \eqref{inf}, \eqref{inf2}. Jarossay \cite{Jar16} recently gave more general methods for generating these identities. We expect that every weighted congruence is a consequence of one of these series identities (this Conjecture A of \cite{Ros13}), and that moreover every such series identities is a consequence of the identities in \cite{Jar16} (this is Conjecture \ref{conjds} of the present work). One of the identity families of \cite{Jar16} is described in Appendix \ref{appdec}.

\begin{remark}
\label{fact}
The identities described in Appendix \ref{appdec} can be computed algorithmically, so there is an algorithm for proving weighted congruences. We expect (Conjecture \ref{conjds}) that \emph{every} weighted congruence can be proven using this algorithm.
\end{remark}

\section{Reduction to multiple harmonic sums}
\label{secred}

It is sometimes possible to prove a \ppc{} by reducing it to a weighted MHS congruence. We illustrate this technique with three examples.

\subsection{Binomial coefficients}
We prove the following congruence.

 \begin{congruence}
 \label{cb}
 For $p$ odd:
\begin{equation}
\label{exa}
12-9{2p\choose p}+2{3p\choose p}\equiv 24p^3\sum_{n=1}^{p-1}\frac{1}{n^3}\mod p^6.
\end{equation}
\end{congruence}
That the left hand side is divisible by $p^3$ follows from the well-known congruence for positive integers $k$, $r$,
\[
{kp\choose rp}\equiv {k\choose p}\mod p^3.
\]
\begin{proof}
Equation \eqref{binex} above is the identity ${2p\choose p}=2\sum_{n=0}^{p-1}\hp(1^n)$. It follows that ${2p\choose p}\equiv 2\sum_{n=0}^5\hp(1^n)\mod p^6$. A similar computation gives ${3p\choose p}\equiv 3\sum_{n=0}^5 2^n \hp(1^n)\mod p^6$. So \eqref{exa} is equivalent to the weighted congruence
\begin{equation}
\label{wc}
\sum_{n=1}^5\lp6\cdot 2^n-18\rp \hp(1^n)\equiv 24\,\hp(3)\mod p^6.
\end{equation}
There are two approaches to derive \eqref{wc}.
\begin{itemize} \item The first approach is to piece together known results from the literature. We start with $\hp(1^5)\equiv 0$ and $\hp(1^3)\equiv 2\hp(1^4)\mod p^6$ (\cite{Zha08}, Theorem 1.6 ). The identity
\[
\lp 1-\frac{p}{1}\rp\cdots\lp 1-\frac{p}{p-1}\rp=1
\]
implies that
\[
\hp(1)-\hp(1^2)+\hp(1^3)-\hp(1^4)\equiv 0\mod p^6
\]
(this appeared in \cite{Ros12a}, Proposition 2.1). Together, these congruences show that \eqref{exa} is equivalent to
\[
72\hp(1^3)\equiv 24\hp(3),
\]
which also follows from \cite{Zha08}, Theorem 1.6.

\item The second approach is to verify \eqref{wc} by computer, using Remark \ref{fact}. 
\end{itemize}
\end{proof}
There is nothing special about the modulus $p^6$ in \eqref{exa}: the expression for the binomial coefficients in terms of the multiple harmonic sums $\hp(1^n)$ can be extended to hold modulo any desired power of $p$. In particular, it can be shown using the same technique that the difference between the left hand side and the right hand side of \eqref{exa} is congruent to $-48\hp(4,1,1)$ modulo $p^7$. Similar computations are possible for binomial coefficients of various other shapes, see Section \ref{ssbin}.

%
%
\subsection{\ape{} numbers}
\label{ssape}
In 1979 \ape{} proved that $\zeta(3)$ is irrational (see \cite{Ape81}). The proof involves a sequence of positive integers $b_n$, now called \emph{\ape{} numbers}, defined by the recurrence relation $b_0=1$, $b_1=5$, and
\[
b_n=\frac{34n^3-51n^2+27n-5}{n^3} \,b_{n-1}-\frac{(n-1)^3}{n^3} \,b_{n-2}
\]
for $n\geq 2$. From the recurrence relation it is not obvious that the \ape{} numbers are integers; this follows from an expression for $b_n$ in terms of binomial coefficients:
\[
b_n=\sum_{k=0}^n {n\choose k}^2{n+k\choose k}^2.
\]
In the introduction, we stated the congruence
\begin{equation}
\label{eqacong}
b_{p-1}\equiv 1\mod p^3,
\end{equation}
which is given in \cite{Cho80}. We prove an extension modulo $p^5$.
\begin{congruence}
\label{ca1}
\label{propape}
For every prime $p\geq 7$, the \ape{} numbers satisfy
\begin{equation}
\label{apecong}
{2p\choose p}\cdot b_{p-1}\equiv 2\mod p^5.
\end{equation}
\end{congruence}

Note that ${2p\choose p}\equiv 2\mod p^3$, so \eqref{apecong} is a generalization of \eqref{eqacong}. The key tool is the following lemma, which allow us to prove a wide range of congruences for $b_{p-1}$.

\begin{lemma}
\label{ape}
There exists an algorithmically computable sequences of integer coefficients $\alpha_{\bs}$, indexed by the compositions, such that for every prime $p$, we have
\begin{equation}
\label{ra}
b_{p-1}=\sum_{\bs}\alpha_{\bs}\hp(\bs).
\end{equation}
The coefficients $\alpha_{\bs}$ have the property that for every $p$, there are only finitely many compositions $\bs$ of length less than $p$ with $\alpha_{\bs}\neq0$, so the right hand side of \eqref{ra} is actually a finite sum.
\end{lemma}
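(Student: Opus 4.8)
The plan is to expand the binomial-coefficient formula $b_{p-1}=\sum_{k=0}^{p-1}\binom{p-1}{k}^2\binom{p-1+k}{k}^2$ factor-by-factor in $p$, exactly as was done for $\binom{2p}{p}$ in \eqref{binex}. Writing each binomial as a product of linear terms, $\binom{p-1}{k}=(-1)^k\prod_{j=1}^{k}(1-p/j)$ and $\binom{p-1+k}{k}=\frac{p}{k}\prod_{i=1}^{k-1}(1+p/i)$, the squared product collapses for $k\geq 1$ thanks to the pairing $(1-p/i)(1+p/i)=1-p^2/i^2$:
\[
\binom{p-1}{k}^2\binom{p-1+k}{k}^2=\frac{p^2}{k^2}\Bigl(1-\frac{p}{k}\Bigr)^2\prod_{i=1}^{k-1}\Bigl(1-\frac{p^2}{i^2}\Bigr)^2.
\]
The $k=0$ term contributes the constant $1$ and is recorded separately.

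Next I would expand the two surviving factors into multiple harmonic sums with bound $k-1$. By selecting for each index $i<k$ either $1$, $-2p^2/i^2$, or $p^4/i^4$ and collecting, the product becomes $\prod_{i=1}^{k-1}(1-p^2/i^2)^2=\sum_{\mathbf{t}}c_{\mathbf{t}}\,p^{|\mathbf{t}|}H_{k-1}(\mathbf{t})$, where $\mathbf{t}$ runs over compositions with all parts in $\{2,4\}$ and $c_{\mathbf{t}}=\prod_j a_{t_j}$ with $a_2=-2$, $a_4=1$; the prefactor is $\frac{p^2}{k^2}(1-p/k)^2=\frac{p^2}{k^2}-\frac{2p^3}{k^3}+\frac{p^4}{k^4}$. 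The single device that turns this into weighted MHS is the reindexing identity $\sum_{k=1}^{p-1}\frac{1}{k^s}H_{k-1}(\mathbf{t})=H_{p-1}(s,\mathbf{t})$, valid because prepending the largest index $k$ with exponent $s$ produces precisely the descending chain defining $H_{p-1}(s,\mathbf{t})$; after matching powers of $p$ this reads $p^{|\mathbf{t}|}\sum_{k}\frac{p^s}{k^s}H_{k-1}(\mathbf{t})=\hp(s,\mathbf{t})$. Summing over $k$ and restoring the $k=0$ term then gives the closed form
\[
b_{p-1}=1+\sum_{\mathbf{t}}c_{\mathbf{t}}\Bigl(\hp(2,\mathbf{t})-2\hp(3,\mathbf{t})+\hp(4,\mathbf{t})\Bigr),
\]
from which the coefficients $\alpha_{\bs}$ can be read off directly.

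This exhibits $\alpha_{\bs}$ as nonzero exactly when $\bs=(s_1,\ldots,s_k)$ has $s_1\in\{2,3,4\}$ and $s_2,\ldots,s_k\in\{2,4\}$, in which case $\alpha_{\bs}$ equals the prefactor coefficient of $s_1$ (that is $+1,-2,+1$ according as $s_1=2,3,4$) times $\prod_{j\geq 2}a_{s_j}$; every such coefficient is an integer produced by an explicit finite computation, which is the asserted algorithmic computability. For the finiteness statement I would argue in two steps: first, $H_{p-1}(\bs)=0$ as soon as $\bs$ has length $\geq p$, since the descending chain cannot fit in $\{1,\ldots,p-1\}$, so only $\bs$ of length $<p$ contribute; second, among compositions of the above shape with length $<p$ there are only $3(2^{p-1}-1)+1$ in total, as the parts come from the finite sets $\{2,3,4\}$ and $\{2,4\}$ and the length is bounded. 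Hence the sum in \eqref{ra} is finite for each $p$ even though infinitely many $\alpha_{\bs}$ are nonzero overall.

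I expect the only real care to be in the combinatorial bookkeeping of the two expansions and, above all, in checking that $\alpha_{\bs}$ is well defined: because the prepended index $k$ is always strictly larger than the indices $i<k$, the leading part $s_1$ of $\bs$ is unambiguously the prefactor exponent while $(s_2,\ldots,s_k)$ comes from the $\{2,4\}$-product, so no composition $\bs$ receives contributions from two different sources and its coefficient is unambiguous. Everything else is a direct, if slightly tedious, manipulation of finite products and sums.
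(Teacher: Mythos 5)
Your proof is correct and follows the paper's own argument essentially step for step: both factor the binomial coefficients into linear terms, pair them into $(1-p^2/i^2)$ factors, expand these into multiple harmonic sums with upper bound $k-1$, and sum over $k$ by prepending the outer index to the composition. The only cosmetic difference is that you expand $\prod_{i=1}^{k-1}(1-p^2/i^2)^2$ directly as a product of trinomials with parts in $\{2,4\}$, whereas the paper squares $\sum_{i\geq 0}(-1)^i p^{2i}H_{k-1}(2^i)$ via the stuffle product; the two expansions yield the same integer coefficients.
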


\begin{proof}
For $n$ an arbitrary positive integer, we compute
\begin{align*}
b_{n-1}&=\sum_{k=0}^{n-1} {n-1\choose k}^2{n+k-1\choose k}^2\\
&=1+\sum_{k=1}^{n-1}\lp\frac{n-k}{k}\rp^2\left[ \lp1-\frac{n}{1}\rp\ldots\lp1-\frac{n}{k-1}\rp\right]^2\cdot\\&\hspace{60mm}\frac{n^2}{k^2}\left[ \lp1+\frac{n}{1}\rp\ldots\lp1+\frac{n}{k-1}\rp\right]^2\\
&=1+\sum_{k=1}^{n-1}\lp\frac{n-k}{k}\rp^2\frac{n^2}{k^2}\left[ \lp1-\frac{n^2}{1^2}\rp\ldots\lp1-\frac{n^2}{(k-1)^2}\rp\right]^2\\
&=1+\sum_{k=1}^{n-1}\lp\frac{n^4}{k^4}-2\frac{n^3}{k^3}+\frac{n^2}{k^2}\rp\lp\sum_{i\geq 0} (-1)^i n^{2i}H_{k-1} (2^i)\rp^2.
\end{align*}
The product of the final two terms inside the outer sum can be expanded as a linear combination of terms $n^{|\bs|}H_{k-1}(\bs)$ using the stuffle product, and multiplying by $n^a/k^a$ and summing over $k$ gives terms $n^{|\bs|+a}H_{n-1}(a,\bs)$.

Take $n=p$ to obtain the desired result.

\end{proof}

\begin{proof}[Proof of Congruence \ref{propape}]
Lemma \ref{ape} and the expression \eqref{binex} for ${2p\choose p}$ shows that
 \eqref{apecong} is equivalent to a weighted congruence. This weighted congruence is not too hard to prove by hand, but by Remark \ref{fact} it can also be verified by computer.
\end{proof}

%
%
\subsection{Bernoulli numbers and the $p$-adic zeta function}
\label{sszeta}
The Bernoulli numbers $B_n$, $n\geq 0$ are a sequence of rational numbers defined by the exponential generating function
\[
\frac{x}{e^x-1}=\sum_{n=0}^\infty \frac{B_n}{n!}x^n.
\]
They are known to satisfy many congruences. We mention here the \emph{Kummer congruence}, which says that for any prime $p$ and integer $k\geq 1$,
\[
(1-p^{n-1})\frac{B_n}{n}\equiv(1-p^{m-1})\frac{B_m}{m}\mod p^k
\]
holds whenever $m$ and $n$ are positive integers, neither divisible by $p-1$, satisfying $n\equiv m\mod (p-1)p^{k-1}$.

The Riemann zeta function takes rational values at the negative integers, and there is an explicit formula in terms of Bernoulli numbers:
\[
\zeta(-n)=-\frac{B_{n+1}}{n+1}.
\]
For $p$ a fixed prime, the Kummer congruences imply that the restriction of $(1-p^{-s})\zeta(s)$ (the zeta function with the Euler factor at $p$ removed) to the negative integers in a given residue class modulo $p-1$ is uniformly $p$-adically continuous. Since the negative integers in any residue class modulo $p-1$ are $p$-adically dense in the integers, we can extend this function to the positive integers.
\begin{definition}
For each integer $k\geq 2$, we define the \emph{$p$-adic zeta value}
\[
\zeta_p(k):=\hspace{-4mm}\lim_{\substack{n\to-\infty,\\n\equiv k\mod p-1,\\n\to k\text{ $p$-adically}}}(1-p^{-n})\zeta(n)\in\Q_p.
\]
\end{definition}
For all integers $k$, $n\geq 2$, and all primes $p\geq k+2$:
\begin{align}
\label{eqzetak}
\zeta_p(k)&\equiv \frac{B_{p-k}}{k}\mod p,\\
\zeta_p(k)&\equiv \frac{B_{p^{n-1}(p-1)+1-k}}{k-1}\lp1-\frac{p^{n-1}}{k-1}\rp\mod p^{n}.
\end{align}
The $p$-adic zeta values can be expressed in terms of the more general $p$-adic $L$-functions of Kubota-Leopoldt; we have
\[
\zeta_p(k)=L_p(k,\omega_p^{1-k}),
\]
where $\omega_p$ is the Teichm\"uller character.

The $p$-adic zeta values can be expressed in terms of weighted multiple harmonic sums.
\begin{theorem}[\cite{Was82}, Theorem 5.11]
For every integer $k\geq 2$, there is a $p$-adically convergent series identity
\begin{equation}
\label{z}
p^k \zeta_p(k)=\sum_{n\geq k-1}\frac{(-1)^{k+n+1}}{k-1}{n-1\choose k-2}B_{n+1-k}\hp(n).
\end{equation}
\end{theorem}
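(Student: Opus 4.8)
The plan is to turn the right-hand side into a sum over the units $m=1,\dots,p-1$ of a single $p$-adically analytic function of the parameter $k$, and then to pin that function down by evaluating it at the negative integers, where the Kubota--Leopoldt interpolation that defines $\zeta_p$ takes over.

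First I would reindex. Writing $H_{p-1}(n)=\sum_{m=1}^{p-1}m^{-n}$ so that $\hp(n)=p^nH_{p-1}(n)$, interchanging the (convergent) sums and substituting $j=n-k+1$, together with the elementary identity $(-1)^{j}\binom{j+k-2}{j}=\binom{1-k}{j}$, gives
\[
\sum_{n\ge k-1}\frac{(-1)^{k+n+1}}{k-1}\binom{n-1}{k-2}B_{n+1-k}\hp(n)=\frac{1}{k-1}\sum_{m=1}^{p-1}\Big(\frac{p}{m}\Big)^{k-1}\sum_{j\ge0}\binom{1-k}{j}B_j\Big(\frac{p}{m}\Big)^{j}.
\]
Before anything else I would record convergence: each $m^{-n}\in\Z_p^{\times}$, von Staudt--Clausen gives $pB_j\in\Z_p$, and $\binom{1-k}{j}\in\Z_p$ for $k\in\Z_p$, so the $j$-th inner term has valuation at least $j-1$ \emph{uniformly} in $k$. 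Hence the inner series not only converges but converges uniformly as $k$ ranges over a fixed residue class modulo $p-1$ inside $\Z_p$.

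The engine of the proof is to set $g(k):=p^{-k}\cdot(\text{right-hand side})$ and argue that $g(k)=\zeta_p(k)$. Uniform convergence of a series whose terms are continuous in $k$ (the $\binom{1-k}{j}$ are polynomial in $k$, and $m^{-(k-1)}=\omega_p(m)^{1-k}\langle m\rangle^{1-k}$ is continuous once $k$ is fixed modulo $p-1$, matching $\zeta_p(k)=L_p(k,\omega_p^{1-k})$) shows $g$ extends to a continuous function on each such class. It therefore suffices to evaluate $g$ at the negative integers $n$ of that class, since these accumulate at $k$ $p$-adically while tending to $-\infty$, and $\zeta_p(k)$ is by definition the limit of $(1-p^{-n})\zeta(n)$ along them. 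At such an $n$ the exponent $1-n$ is a positive integer, so the inner sum \emph{terminates}: from $B_r(y)=\sum_{j}\binom{r}{j}B_jy^{r-j}$ one gets $\sum_{j}\binom{1-n}{j}B_j(p/m)^{j}=(p/m)^{1-n}B_{1-n}(m/p)$, so the $m$-th summand collapses to $B_{1-n}(m/p)$. Raabe's distribution relation $\sum_{m=0}^{p-1}B_{r}(m/p)=p^{1-r}B_{r}$ then yields $\sum_{m=1}^{p-1}B_{1-n}(m/p)=(p^{n}-1)B_{1-n}$, whence the right-hand side at $n$ equals $\tfrac{1}{n-1}(p^{n}-1)B_{1-n}=(p^{n}-1)\zeta(n)$ via $\zeta(n)=B_{1-n}/(n-1)$. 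Dividing by $p^n$ gives $g(n)=(1-p^{-n})\zeta(n)$, exactly the quantity $\zeta_p$ interpolates.

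Combining these, $g$ is continuous on the residue class of $k$ and agrees with $(1-p^{-n})\zeta(n)$ at the dense set of negative integers there, so $g(k)=\zeta_p(k)$ and the claimed identity $p^k\zeta_p(k)=\sum_n(\cdots)\hp(n)$ follows. I expect the main obstacle to be the analytic bookkeeping that legitimizes the scheme, rather than either algebraic step: one must prove the uniform $p$-adic convergence that makes the \emph{infinite} series at positive $k$ and the \emph{terminating} sum at negative $n$ two values of one continuous function, and handle the Teichm\"uller/residue-class splitting $m^{-(k-1)}=\omega_p(m)^{1-k}\langle m\rangle^{1-k}$ so that the interpolating integers are genuinely dense and in the correct class modulo $p-1$. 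The two purely algebraic inputs---the Bernoulli-polynomial evaluation of the truncated inner sum and Raabe's relation---are routine once this limiting framework is in place, and they automatically reproduce the known vanishing $\zeta_p(k)=0$ for even $k$, a reassuring consistency check.
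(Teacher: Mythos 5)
Your argument is correct, and it is essentially the proof of the cited result: the paper itself gives no proof here but refers to \cite{Was82}, Theorem 5.11, whose standard proof is exactly your scheme (the stated series is Washington's formula for $L_p(s,\chi)$ specialized to $F=p$, $\chi=\omega_p^{1-k}$, $s=k$, after multiplying by $p^k$ and reindexing $j=n-k+1$). Your reindexing, the uniform valuation bound $v_p\geq j-1$ from $\binom{1-k}{j}\in\Z_p$ and $pB_j\in\Z_p$, the collapse of the terminating inner sum to $(p/m)^{1-n}B_{1-n}(m/p)$ at negative integers, and the multiplication (Raabe) relation all check out, so continuity on the residue class plus density of the interpolating negative integers closes the argument exactly as in the reference.
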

The most important feature of \eqref{z} is that the coefficients
\[
\frac{(-1)^{k+n+1}}{k-1}{n-1\choose k-2}B_{n+1-k}
\]
are independent of $p$. Combining \eqref{ra} with \eqref{z} and using Remark \ref{fact}, it is a finite computation to verify the following result relating the \ape{} numbers to $\zeta_p$.
\begin{congruence}
\label{cz1}
\begin{equation}
\label{apez}
b_{p-1}\equiv 1+2p^3\zeta_p(3)-16p^5\zeta_p(5)-14p^6\zeta_p(3)^2-100p^7\zeta_p(7)\mod p^8.
\end{equation}
\end{congruence}
\noindent We can similarly get expressions for $a_{p-1}$ in terms of Bernoulli numbers. We use modulus $p^6$ for simplicity:
\begin{congruence}
\label{cz2}
\[
b_{p-1}\equiv 1+\lp p^3-\frac{p^5}{2}\rp B_{p^3-p^2-2}-\frac{16}{5}p^5 B_{p-5}\mod p^6.
\]
\end{congruence}
\noindent Many other congruences can be derived with the same technique.

\begin{remark}
We can extend Congruence \ref{apez} to hold modulo arbitrary large powers of $p$ using $p$-adic analogues of the multiple zeta values. This is done by the author in \cite{Ros16b}.
\end{remark}

%
%
\section{Mixed congruences}
\label{secmix}
The previous section involves weighted multiple harmonic sums $\hp(\bs):=p^{|\bs|}\H(\bs)$.
One sometimes encounters expressions $p^b\H(\bs)$ with $b\neq|\bs|$. We make the following definition.
\begin{definition}
A \emph{mixed MHS congruence} (or just \emph{mixed congruence}) is a result of the form
\begin{equation}
\label{eqinh}
\sum_{i=1}^m a_i p^{b_i}\H(\bs_i)\equiv 0\mod p^n\gap\text{for all but finitely many $p$},
\end{equation}
where $a_1,\ldots, a_m\in\Q$, $b_1,\ldots,b_m\in\Z$, and $\bs_1,\ldots,\bs_m$ are compositions, all of which are independent of $p$.
\end{definition}

Thankfully, we expect that every mixed congruence is a consequence of weighted congruences. To be precise:

\begin{conjecture}
\label{conmix}
The mixed congruence \eqref{eqinh} holds if and only if for every $k\in\Z$, the weighted congruence
\[
\sum_{\substack{i=1\\|\bs_i|=b_i+k}}^{m}a_i\hp(\bs_i)\equiv 0\mod p^{n+k}
\]
holds for all $p$ sufficiently large.
\end{conjecture}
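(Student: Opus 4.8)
The plan is to recast the mixed congruence so that the grouping by $k$ becomes transparent, dispose of the easy implication directly, and then attack the hard one by a minimal-valuation analysis. First I would reindex: for each term set $k_i:=|\bs_i|-b_i$, so that
\[
p^{b_i}\H(\bs_i)=p^{b_i-|\bs_i|}\hp(\bs_i)=p^{-k_i}\hp(\bs_i).
\]
Because there are finitely many terms, only finitely many integers occur as values of $k_i$. Collecting terms with a common value $k$, set
\[
S_k:=\sum_{\substack{i=1\\ |\bs_i|=b_i+k}}^{m} a_i\,\hp(\bs_i),
\]
a weighted expression in the sense of \eqref{defwc}. Then the left-hand side of \eqref{eqinh} is exactly $\sum_k p^{-k}S_k$, and the assertion is that $\sum_k p^{-k}S_k\equiv 0\mod p^n$ holds for all large $p$ if and only if $S_k\equiv 0\mod p^{n+k}$ holds for all large $p$, for every $k$.

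The reverse implication needs no structural input. If $S_k\equiv 0\mod p^{n+k}$ for all $p$ beyond a bound $N_k$, then $v_p(p^{-k}S_k)=-k+v_p(S_k)\geq n$, and summing the finitely many groups shows $\sum_k p^{-k}S_k\equiv 0\mod p^n$ for $p\geq\max_k N_k$; here finiteness of the set of $k$ is what makes ``sufficiently large $p$'' uniform.

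The forward implication is the crux, and is where I expect the genuine difficulty to lie. The natural approach is a leading-term argument: suppose each nonzero $S_k$ admits a \emph{generic valuation} $d_k$, meaning $v_p(S_k)=d_k$ for all but finitely many $p$. Then $p^{-k}S_k$ has generic valuation $d_k-k$, and if the minimum $m:=\min_k(d_k-k)$ is attained by a single $k$, the whole sum has generic valuation exactly $m$; the hypothesis $\sum_k p^{-k}S_k\equiv 0\mod p^n$ then forces $m\geq n$, whence $d_k-k\geq n$ and so $S_k\equiv 0\mod p^{n+k}$ for every $k$ simultaneously. Two ingredients must be supplied: (i) the existence of the generic valuations $d_k$, and (ii) control of the case where several groups tie for the minimum.

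Part (ii) is the essential obstacle. When several groups attain the minimum $m<n$, the inequality $v_p(\sum_k p^{-k}S_k)\geq m$ can be strict, because the reductions of $S_k/p^{d_k}$ modulo $p$ for the tied groups may sum to zero. Ruling this out amounts to showing that the mod-$p$ leading coefficients attached to distinct graded pieces are linearly independent for infinitely many $p$, which is precisely the no-hidden-relations statement governed by the weight grading of the MHS algebra and by the conjectural completeness of the series identities of \cite{Jar16} (Conjecture \ref{conjds}). It is further entangled with the arithmetic of the Bernoulli-number coefficients entering through \eqref{z}, since the generic valuation of even a single $\hp(n)$ is governed by congruence properties of Bernoulli numbers, hence by the regularity of $p$. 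For these reasons I would first establish the equivalence conditionally on Conjecture \ref{conjds} (or on Conjecture A of \cite{Ros13}), deducing (i) from the expectation that weighted congruences are exactly the consequences of those series identities, and only afterwards attempt to discharge the hypothesis in the low-weight or low-depth cases where the relevant leading coefficients can be written down explicitly.
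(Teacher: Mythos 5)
The statement you set out to prove is Conjecture \ref{conmix}: the paper does not prove it, and records only that ``the `if' part of the conjecture is obvious.'' Your treatment of that direction is correct and is exactly the paper's implicit argument: write $p^{b_i}\H(\bs_i)=p^{-k_i}\hp(\bs_i)$ with $k_i=|\bs_i|-b_i$, group into the finitely many pieces $S_k$, and sum the bounds $v_p(p^{-k}S_k)\geq n$, with finiteness of the set of $k$ giving uniformity in $p$.

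For the forward implication you do not give a proof; you give a program, and both of its ingredients are genuinely open, as you partly acknowledge. (i) The existence of a generic valuation $d_k$ for a weighted combination $S_k$ is not known even in the simplest cases: already for $S=\hp(2)$ one has $v_p(\hp(2))=3$ exactly when $B_{p-3}\not\equiv 0\bmod p$, i.e.\ when $p$ is not a Wolstenholme prime, and the distribution of such primes is not understood; for a general $S_k$ the generic valuation would be dictated by which weighted congruences hold, which is precisely the content of Conjecture \ref{conjds}. (ii) The tie-breaking step is where the real content of Conjecture \ref{conmix} sits: nothing currently rules out a conspiracy in which two graded pieces $p^{-k}S_k$ and $p^{-k'}S_{k'}$ each have valuation $m<n$ for all large $p$ while their sum has valuation at least $n$ --- that would be exactly a mixed congruence not implied by weighted ones. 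Your reduction of (ii) to a mod-$p$ linear-independence statement about leading coefficients is a reasonable heuristic, but it does not follow from Conjecture \ref{conjds} as stated: that conjecture describes which weighted expressions \emph{are} congruent to zero, and says nothing about the residues of expressions that are not. So even the conditional version of your forward implication is not established. In short: the ``if'' half is correct and complete and matches the paper; the ``only if'' half remains exactly as open as the paper leaves it, and your proposal should not be read as closing it.
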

\noindent Note that the ``if'' part of the conjecture is obvious.

Sometimes a \ppc{} can be reduced to a mixed MHS congruence. The remainder of this section is devoted to three examples.

\subsection{A sum involving harmonic numbers}
\label{subsecsun}
In a work from 2012, Z.-W.\ Sun \cite{Sun12} proved four congruences for sums involving harmonic numbers. We consider one of them here:
\begin{equation}
\label{eqsun}
\sum_{k=1}^{p-1} H_k(1)^2\equiv 2p-2\mod p^2.
\end{equation}
The summand $H_k(1)^2$ can be expanded as $2 H_k(1,1)+H_k(2)$, so we have
\begin{align}
\sum_{k=1}^{p-1} H_k(1)^2&=\sum_{p-1\geq k\geq n>m\geq 1}\frac{2}{nm}+\sum_{p-1\geq k\geq n\geq 1}\frac{1}{n^2}\label{eqsu}\\
&=\sum_{p-1\geq n>m\geq 1}\frac{2(p-n)}{nm}+\sum_{p-1\geq n\geq 1}\frac{p-n}{n^2}\\
&=2p\H(1,1)-2\sum_{p-1\geq n>m\geq 1}\frac{1}{m}+p\H(2)-\H(1)\\
&=2p\H(1,1)-2\sum_{p-1\geq m\geq 1}\frac{p-1-m}{m}+p\H(2)-\H(1)\\
&=2p\H(1,1)-(2p-2)\H(1)+2(p-1)+p\H(2)-\H(1)\\
&=2p-2+(3-2p)\H(1)+p\H(2)+2p\H(1,1).
\end{align}
This shows that \eqref{eqsun} is equivalent to a mixed congruence. It also makes it easy to find strengthenings of \eqref{eqsun}. For example:
\begin{congruence}
\label{cs1}
\label{propss}
For all primes $p\geq 5$,
\[
\sum_{k=1}^{p-1} H_k(1)^2\equiv 2p-2+\frac{1}{3}p^2(2p-1)\H(2,1)\mod p^4.
\]
\end{congruence}
\begin{proof}
This is equivalent to the mixed MHS congruence
\begin{equation}
\label{eqmix}
(3-2p)\H(1)+p\H(2)+2p\H(1,1)\equiv  \frac{1}{3}p^2(2p-1)\H(2,1)\mod p^4.
\end{equation}
Equation \eqref{eqmix} follows from weighted congruences
\begin{gather*}
3\hp(1)+\hp(2)+2\hp(1,1)\equiv -\frac{1}{3}\hp(2,1)\mod p^5,\\
-2\hp(1)\equiv \frac{2}{3}\hp(2,1)\mod p^4,
\end{gather*}
which can be derived using Remark \ref{fact}.
\end{proof}

It is worth emphasizing that the expansion \eqref{eqsu}, and the subsequent proof of Proposition \ref{propss}, can be computed completely mechanically. We can similarly extend another congruence from \cite{Sun12}:
\begin{congruence} 
\label{cs2}
For $p\geq 5$,
\[
\sum_{k=1}^{p-1}k^2 H_k(1)^2\equiv-\frac{4}{9}+\frac{79}{108}p-\frac{13}{36}p^2+\frac{1}{6}\H(1)\mod p^3.
\]
\end{congruence}

\subsection{$p$-restricted harmonic numbers}\label{sspres}
We prove the following.
\begin{congruence}
\label{cr1}
\label{propmix}
For $p\geq 7$,
\begin{equation}
\label{exmix}
\sum_{\substack{n=1\\p\nmid n}}^{p^2}\frac{1}{n}\equiv p^2\sum_{n=1}^{p-1}\frac{1}{n}\mod p^6.
\end{equation}
\end{congruence}
The quantity on the left hand side of \eqref{exmix} is called a \emph{$p$-restricted harmonic number} because we exclude terms whose index is divisible by $p$. The $p$-restriction is just for convenience, and similar results are possible for the ordinary (non-restricted) harmonic numbers. We begin with a lemma.
\begin{lemma}
\label{lemp}
For all primes $p$, there is a convergent $p$-adic series identity
\begin{equation}
\label{eqpr}
\sum_{\substack{n=1\\p\nmid n}}^{p^2}\frac{1}{n}=\sum_{m=0}^\infty \sum_{k=1}^{m+1}(-1)^m{m+1\choose k}\frac{B_{m+1-k}}{m+1}p^{k+m}\H(m+1).
\end{equation}
\end{lemma}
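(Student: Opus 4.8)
The plan is to split the restricted sum according to residue classes modulo $p$, expand each reciprocal as a $p$-adically convergent geometric series, and then recognize the resulting inner power sums through the Bernoulli-number (Faulhaber) formula.

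First I would partition the index set: every integer $n$ with $1\le n\le p^2$ and $p\nmid n$ is uniquely of the form $n=jp+i$ with $0\le j\le p-1$ and $1\le i\le p-1$ (these account for exactly $p(p-1)=p^2-p$ values of $n$, matching the number of integers in $[1,p^2]$ not divisible by $p$). Hence
\[
\sum_{\substack{n=1\\p\nmid n}}^{p^2}\frac1n=\sum_{j=0}^{p-1}\sum_{i=1}^{p-1}\frac{1}{jp+i}.
\]
Since $1\le i\le p-1$ forces $v_p(i)=0$, we have $|jp/i|_p\le|p|_p<1$ for every $j$, so the geometric expansion
\[
\frac{1}{jp+i}=\frac1i\cdot\frac{1}{1+jp/i}=\sum_{m=0}^\infty(-1)^m\frac{j^m p^m}{i^{m+1}}
\]
converges $p$-adically (for $j=0$ only the $m=0$ term survives).

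Next I would interchange the order of summation. Because the sums over $i$ and $j$ are finite, moving them past the convergent series in $m$ is immediate from linearity, and the $i$- and $j$-dependence separates:
\[
\sum_{\substack{n=1\\p\nmid n}}^{p^2}\frac1n=\sum_{m=0}^\infty(-1)^m p^m\lp\sum_{j=0}^{p-1}j^m\rp\lp\sum_{i=1}^{p-1}\frac{1}{i^{m+1}}\rp=\sum_{m=0}^\infty(-1)^m p^m\lp\sum_{j=0}^{p-1}j^m\rp\H(m+1),
\]
using $\H(m+1)=\sum_{i=1}^{p-1}i^{-(m+1)}$ and the convention $0^0=1$ (so the $m=0$ power sum equals $p$). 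The key input is now Faulhaber's formula in the form
\[
\sum_{j=0}^{p-1}j^m=\frac{1}{m+1}\sum_{k=0}^{m}{m+1\choose k}B_k\,p^{m+1-k},
\]
valid with the convention $B_1=-\tfrac12$ coming from $\frac{x}{e^x-1}=\sum B_n x^n/n!$. Substituting and reindexing $k\mapsto m+1-k$ (so $k$ now runs from $1$ to $m+1$, ${m+1\choose k}$ is unchanged, $B_k$ becomes $B_{m+1-k}$, and $p^{m+1-k}$ becomes $p^{k}$) collects the powers of $p$ into $p^{k+m}$ and yields exactly the claimed double series.

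Finally I would confirm the $p$-adic convergence of the resulting series, which is the only point requiring genuine care. In the $m$-th block each term carries $p^{k+m}$ with $k\ge1$, hence valuation at least $m+1$; against this, the factor $\tfrac{1}{m+1}$ loses at most $v_p(m+1)\le\log_p(m+1)$, the factor $\H(m+1)$ is $p$-integral (each $i^{-(m+1)}$ with $1\le i\le p-1$ is a $p$-adic unit), and by von Staudt--Clausen the denominator of $B_{m+1-k}$ contributes at most one factor of $p$. Thus each term has valuation at least $m-\log_p(m+1)$, which tends to $\infty$, so the series converges and the rearrangement is legitimate. I expect the bookkeeping of the reindexing together with the $0^0$/$m=0$ edge case to be the routine thing to get right, while the convergence estimate via von Staudt--Clausen is the only nontrivial step.
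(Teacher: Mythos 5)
Your proof is correct and follows essentially the same route as the paper's: split the sum by residue classes $n=jp+i$, expand $(jp+i)^{-1}$ as a geometric series, swap the order of summation, and apply Faulhaber's formula to the inner power sum. Your explicit convergence estimate via von Staudt--Clausen is a welcome addition that the paper leaves implicit.
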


\begin{proof}
We expand the left hand side as a $p$-adic series
\begin{align}
\sum_{\substack{n=1\\p\nmid n}}^{p^2}\frac{1}{n}&=\sum_{a=0}^{p-1}\sum_{j=1}^{p-1}\frac{1}{j+ap}\\
&=\sum_{a=0}^{p-1}\sum_{j=1}^{p-1}\frac{1}{j}\lp1+\frac{ap}{j}\rp^{-1}\\
&=\sum_{a=0}^{p-1}\sum_{j=1}^{p-1}\frac{1}{j}\sum_{m=0}^\infty(-1)^m\frac{a^m p^m}{j^m}\\
&=\sum_{m=0}^\infty(-1)^m p^m \H(m+1)\sum_{a=0}^{p-1}a^m\\
&=\sum_{m=0}^\infty(-1)^mp^{m}\H(m+1) \sum_{k=1}^{m+1}{m+1\choose k}\frac{B_{m+1-k}}{m+1}p^k.
\end{align}
In the last line we have used Faulhaber's formula for sums of $m$-th powers.
\end{proof}

\begin{proof}[Proof of Congruence \ref{propmix}]
We reduce \eqref{eqpr} modulo $p^6$ to find that \eqref{exmix} is equivalent to the mixed congruence
\begin{gather*}
p\H(1)+\lp\frac{p^2}{2}-\frac{p^3}{2}\rp\H(2)+\lp\frac{p^3}{6}-\frac{p^4}{2}+\frac{p^5}{3}\rp\H(3)\\-\frac{p^5}{4}\H(4)-\frac{p^5}{30}\H(5)\equiv p^2\H(1)\mod p^6.
\end{gather*}
This mixed congruence follows from the following weighted congruences, which can be checked algorithmically by Remark \ref{fact}:
\begin{align*}
\hp(1)+\frac{1}{2}\hp(2)+\frac{1}{6}\hp(3)-\frac{1}{30}\hp(5)\equiv 0\mod p^6,\\
-\frac{1}{2}\hp(2)-\frac{1}{2}\hp(3)-\frac{1}{4}\hp(4)\equiv \hp(1)\mod p^5,\\
\frac{1}{3}\hp(3)\equiv 0\mod p^4.
\end{align*}
\end{proof}
Like Proposition \ref{propss}, the proof of Proposition \ref{propmix} is  algorithmic. In Section \ref{ssmps} we generalize this technique to a wide range of multiple harmonic sums (and $p$-restricted variants). We give one additional congruence here, which was discovered and proved entirely by computer.
\begin{congruence}
\label{cr3}
\begin{gather*}
p^3H_{p^2-1}(2,1)\equiv \lp 1+p^3\rp\H(2,1)+\lp-\frac{11}{10}p^5+\frac{11}{10}p^7\rp\H(4,1)
\\\hspace{40mm}+\frac{7}{5}p^6\H(4,1,1)-\frac{59}{560}p^7\H(6,1)\mod p^8.
\end{gather*}
\end{congruence}

\subsection{The `curious' congruence}
\label{sscur}
Let $k$ and $r$ be positive integers, $p$ a prime. Congruence properties of the sum
\begin{equation}
\label{eqcur}
C_{r,k,p}:=\sum_{\substack{n_1,\ldots,n_k\geq 1\\n_1+\ldots+n_k=p^r\\p\nmid n_1\cdots n_k}}\frac{1}{n_1\cdots n_k}.
\end{equation}
have received attention from several authors, including Zhao \cite{Zha07,Zha14}, Ji \cite{Ji05}, Zhou and Cai \cite{Zho07}, Wang and Cai \cite{Wan14}, Wang \cite{Wan15}.
For $r=1$, it can be shown that
\[
C_{1,k,p}=\frac{k!}{p}\H(1^{k-1}),
\]
from which we can read off congruences in terms of Bernoulli numbers:

\[
C_{1,k,p}\equiv\begin{cases}-(k-1)! B_{p-k}\mod p\text{ if $k$ is odd},\\-\frac{n}{2(k+1)}k!B_{p-k-1}p\mod p^2\text{ if $k$ is even.}\end{cases}
\]
Some results are known for arbitrary $r$ and small $k$:
\begin{align*}
C_{r,2,p}&\equiv -\frac{2}{3}p^rB_{p-3}\mod p^{r+1}\gap\text{\cite{Zha14}},\\
C_{r,3,p}&\equiv -2\, p^{r-1}B_{p-3}\mod p^r\gap\text{\cite{Wan14}},\\
C_{r,4,p}&\equiv-\frac{4!}{5}p^r B_{p-5}\mod p^{r+1}\gap\text{\cite{Zha14}},\\
C_{r,5,p}&\equiv -\frac{5!}{6}p^{r-1}B_{p-5}\mod p^r\gap\text{\cite{Wan15}},\\
C_{r,6,p}&\equiv -\frac{5!}{18} p^{r-1}B_{p-3}^2\mod p^r\gap\text{\cite{Wan16}}.
\end{align*}

\begin{theorem}
\label{thbc}
Fix positive integers $r$, $k$, and $n$. Then there exist algorithmically computable compositions $\bs_1,\ldots,\bs_m$, coefficients $c_1,\ldots,c_m\in\Q$, and exponents $b_1,\ldots,b_m\in\Z$, independent of $p$, such that the congruence
\begin{equation}
\label{congC}
C_{r,k,p}\equiv \sum_{i=1}^m c_i p^{b_i}\H(\bs_i)\mod p^n
\end{equation}
holds for all sufficiently large $p$.
\end{theorem}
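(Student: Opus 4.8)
The plan is to turn the linear constraint $n_1+\cdots+n_k=p^r$ together with the divisibility restriction $p\nmid n_i$ into a $p$-adically convergent expansion, in the spirit of the proof of Lemma~\ref{lemp}, and then to reduce the resulting pieces to the basis $\{\H(\bs)\}$ using the reduction for $p$-restricted multiple harmonic sums of Sections~\ref{sspres} and~\ref{ssmps}. First I would split each index by its residue: since $p\nmid n_i$, write $n_i=j_i+pa_i$ with $j_i\in\{1,\ldots,p-1\}$ and $a_i\geq 0$. The constraint then decouples into a residue condition $j_1+\cdots+j_k=cp$ together with a simplex condition $a_1+\cdots+a_k=p^{r-1}-c$, where $c:=(j_1+\cdots+j_k)/p$ necessarily lies in $\{1,\ldots,k-1\}$ once $p>k$. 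Expanding each factor as $\tfrac{1}{n_i}=\tfrac{1}{j_i}\sum_{m_i\geq 0}(-p)^{m_i}a_i^{m_i}j_i^{-m_i}$, which converges $p$-adically, and interchanging summations (legitimate because the $a_i$ are bounded by $p^{r-1}$ and the prefactor drives the tail to zero) writes $C_{r,k,p}$ as a sum over $\mathbf m=(m_1,\ldots,m_k)$ of $(-p)^{|\mathbf m|}$ times a product of a $j$-sum and an $a$-sum, where $|\mathbf m|:=m_1+\cdots+m_k$.

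Next I would evaluate the inner $a$-sum. By iterating Faulhaber's formula exactly as in Lemma~\ref{lemp}, the quantity $\sum_{a_1+\cdots+a_k=A}a_1^{m_1}\cdots a_k^{m_k}$ is a polynomial in $A$ with rational coefficients independent of $p$; substituting $A=p^{r-1}-c$ produces a polynomial in $p^{r-1}$. Together with the factor $(-p)^{|\mathbf m|}$ this forces only finitely many $\mathbf m$ to contribute modulo $p^n$, and reduces the problem, modulo $p^n$, to a $\Q$-linear combination of powers of $p$ times the level-one sums
\[
D_c(\mathbf t):=\sum_{\substack{j_1,\ldots,j_k\in\{1,\ldots,p-1\}\\ j_1+\cdots+j_k=cp}}\frac{1}{j_1^{t_1}\cdots j_k^{t_k}},
\]
for finitely many $c\in\{1,\ldots,k-1\}$ and finitely many compositions $\mathbf t$, all of these data being algorithmically computable.

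It remains to express each $D_c(\mathbf t)$ through the $\H(\bs)$. Eliminating one index, say $j_k=cp-\sigma$ with $\sigma=j_1+\cdots+j_{k-1}$, and using that the $p$-adic valuation of $\sigma$ is $0$ while that of $cp$ is $1$, the factor $(cp-\sigma)^{-t_k}$ expands as a $p$-adically convergent series in $cp/\sigma$ with rational coefficients. Repeated partial-fraction decomposition in the remaining indices converts the resulting sums $\prod j_i^{-t_i}\sigma^{-u}$ into nested (that is, multiple harmonic) sums whose indices lie in $\{1,\ldots,p-1\}$ but whose outer bounds are partial sums that may exceed $p-1$. Normalizing these back to the standard bound $p-1$ is precisely the reduction for $p$-restricted multiple harmonic sums of Sections~\ref{sspres} and~\ref{ssmps} (compare Congruence~\ref{cr3}). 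For $c=1$ no normalization is needed and one lands directly on combinations of $p^{-b}\H(\bs)$; in particular $\mathbf t=(1^k)$ reproduces $C_{1,k,p}=\tfrac{k!}{p}\H(1^{k-1})$. Carrying all coefficients and powers of $p$ through these finitely many steps yields the compositions $\bs_i$, the coefficients $c_i$, and the exponents $b_i$ of \eqref{congC}, all independent of $p$.

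The hard part will be this final reduction of the $D_c(\mathbf t)$: normalizing sums whose indices are $p$-restricted and whose bounds are small multiples of $p$ down to the basis $\{\H(\bs)\}$ of bound $p-1$, which is where one must lean on the machinery of Section~\ref{ssmps} and where the bookkeeping of powers of $p$ is most delicate. By contrast the residue splitting and the Faulhaber evaluation are routine and transparently algorithmic; the only other point requiring care is making the truncation effective, namely bounding how large $|\mathbf m|$ and the partial-fraction weights must be taken to secure a prescribed modulus $p^n$.
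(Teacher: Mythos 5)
Your opening reduction is sound and genuinely different from the paper's: writing $n_i=j_i+pa_i$, observing that the constraint forces $\sum j_i=cp$ with $c\in\{1,\dots,k-1\}$ and $\sum a_i=p^{r-1}-c$, noting that the $j$-sum and the $a$-sum decouple, evaluating the $a$-sum as a polynomial in $p^{r-1}-c$ by iterated Faulhaber, and truncating in $|\mathbf m|$ using $v_p(D_c(\mathbf t))\geq 0$ --- all of that is correct and algorithmic. The gap is in the step you yourself flag as the hard part: the reduction of $D_c(\mathbf t)$ to the $\H(\bs)$. Two things go wrong there. First, the partial-fraction decomposition of $j_1^{-t_1}\cdots j_{k-1}^{-t_{k-1}}\sigma^{-u}$ into nested sums is only immediate when all exponents equal $1$ (that is exactly the symmetrization identity the paper uses); for general exponents $\mathbf t$ --- and your expansion produces arbitrary $t_i=1+m_i$ --- you need the full Mordell--Tornheim-to-nested-sum reduction, which is true but is nowhere established in the paper and is not a one-line matter. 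Second, even granting that decomposition, the resulting nested sums run over the region $j_i\in\{1,\dots,p-1\}$, $(c-1)p<\sigma<cp$: a simplex slab with box constraints on the \emph{differences} of consecutive nested variables. This is not the shape of sum handled by Sections~\ref{sspres} and~\ref{ssmps}: Theorem~\ref{thmps} treats chains $f(p)\geq n_1>\cdots>n_k\geq g(p)+1$ with at most a $p$-divisibility restriction on the indices, not size restrictions on consecutive differences, so ``normalizing back to bound $p-1$'' is not something the existing machinery does for you. You would need a further inclusion--exclusion decomposition of these regions, together with control of the $p$-divisibility of the intermediate partial sums $j_1+\cdots+j_i$ (which, unlike $\sigma$ itself, can be divisible by $p$), before the $p$-adic expansions you invoke are legitimate.

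The paper sidesteps all of this by performing the two moves in the opposite order (Section~\ref{teccur}): it applies the symmetrization identity to $1/(n_1\cdots n_k)$ \emph{first}, while every exponent is still $1$, obtaining $C_{r,k,p}=k!\sum 1/(m_1\cdots m_k)$ over the single nested chain $p^r=m_1>\cdots>m_k\geq 1$ with the divisibility condition $p\nmid(m_i-m_{i+1})$; only then does it substitute $m_i=a_ip-j_i$ and reduce, via the sub-sum decomposition already established in Lemma~\ref{lemaj}, to the MHS algebra. If you want to salvage your route, the cleanest repair is to import that symmetrization step at the outset rather than after the residue splitting; as written, the reduction of the $D_c(\mathbf t)$ is a genuine gap, not a routine appeal to Section~\ref{ssmps}.
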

The proof is cumbersome, and can be found in Section \ref{teccur}.
The expansion \eqref{congC} is messy to work out by hand, but can be written down with the assistance of a computer. We give a few examples.
\begin{congruence}
\label{cc}
\[
C_{2,3,p}\equiv -2p\H(2,1)+\lp2p^3-\frac{11}{5}p^5\rp\H(4,1)-\frac{69}{35}p^5\H(6,1)\mod p^6,
\]

\[
C_{2,4,p}\equiv -\frac{24}{5}p^2\H(4,1)+\frac{28}{15}p^3\H(4,1,1)\mod p^4,
\]

\[
C_{3,3,p}\equiv -2 p^2\H(2,1)+2p^4\H(4,1)\mod p^6,
\]

\[
C_{3,4,p}\equiv -\frac{24}{5}p^3\H(4,1)+\frac{28}{15}p^4\H(4,1,1)\mod p^5.
\]
\end{congruence}

\section{Summary of the technique}
\label{ssmix}
Here we summarize our technique for proving \ppc{}s.

Suppose we encounter a quantity $a_p$ (depending on $p$) in a \ppc{}. We attempt to express $a_p$ as a $p$-adic series of the form
\begin{equation}
\label{eqser2}
a_p=\sum_{i=1}^{\infty}c_i p^{b_i}\H(\bs_i),
\end{equation}
where $c_i\in\Q$, the $\bs_i$ are composition, and $b_i\in\Z$ satisfy $b_i\to\infty$ as $i\to\infty$. We need the convergence of \eqref{eqser2} to be uniform in $p$ in the following weak sense: for each integer $n\geq 0$, the congruence
\begin{equation}
\label{eee}
a_p\equiv \sum_{\substack{i\\ b_i<n}}c_i p^{b_i}\H(\bs_i)
\end{equation}
should hold for all but finitely many $p$. In practice this uniformity condition is very often satisfied. In Sections \ref{secred} and \ref{secmix}, we described expansions of the form \eqref{eqser2} for many quantities: binomial coefficients, \ape{} numbers, values of the $p$-adic zeta function, various generalized harmonic numbers, various sums involving harmonic numbers, among others. In Section \ref{secred}, the expansions had the addition property that  $b_i=|\bs_i|$ for all $i$, i.e.\ they involved weighted multiple harmonic sums.

If we can find an expansion \eqref{eqser2} for every term in a \ppc{}, then that congruence is equivalent to a mixed MHS congruence, i.e., a congruence of the form \eqref{eqinh}. We decompose this mixed congruence into weighted parts, and attempt to prove each weighted part separately, using Remark \ref{fact}. The truth of Conjectures \ref{conmix} and \ref{conjds} would imply that if we fail to find a proof this way, then in fact the supercongruence fails for infinitely many primes.

%
%
\section{The MHS algebra}
\label{secalg}
In this section, we describe the class of quantities for which our technique applies.

\subsection{The ring $\Ai$}
In \cite{Ros13}, the author defined a complete topological ring related to the finite adeles, and used it to study weighted MHS congruences. Here we consider this ring with a prime $p$ inverted. This localization was considered in \cite{Jar16}, \S 6.4.2b, and we adopt the notation of \cite{Jar16}.

\begin{definition}
Define a commutative $\Q$-algebra
\[
\Ai:=\frac{\left\{(a_p)\in\prod_p\Q_p:v_p(a_p)\text{ bounded below}\right\}}{\left\{(a_p)\in\prod_p\Q_p:v_p(a_p)\to\infty \text{ as }p\to\infty\right\}}.
\]
We equip $\Ai$ with a decreasing, exhaustive, separated filtration:
\[
\fil^n\Ai=\left\{(a_p)\in\Ai: a_p\in p^n\Z_p\text{ for all but finitely many $p$}\right\}.
\]
The sets in the filtration form a neighborhood basis of $0$ for a topology, making $\Ai$ into a complete topological ring.
\end{definition}


\subsection{The MHS algebra}

\begin{definition}
\label{defmhsalg}
The \emph{mhs algebra} is the subset $\ma\subset\Ai$ consisting of elements $(a_p)$ such that there exist rational numbers $c_1,c_2,\ldots$, integers $b_1,b_2,\ldots$ going to infinity, and compositions $\bs_1,\bs_2,\ldots$, all independent of $p$, such that
\begin{equation}
\label{eqsumA}
(a_p)=\sum_{i=1}^\infty \lp c_i p^{b_i}\H(\bs_i)\rp\in\Ai.
\end{equation}
Concretely, this is equivalent to the condition that for every integer $n$, the congruence
\begin{equation}
\label{eqss}
a_p\equiv \sum_{\substack{i=1\\b_i<n}}^\infty c_i p^{b_i}\H(\bs_i)\mod p^n
\end{equation}
holds for all sufficiently large $p$ (note that the sum of the right hand side of \eqref{eqss} is finite).
\end{definition}
 Sometimes we will abuse terminology and write that the quantity $a_p$ is in the MHS algebra when we mean $(a_p)\in\mhs$.  If $(a_p)$, $(b_p)\in\mhs$, then the \ppc{}
\[
a_p\equiv b_p\mod p^n
\]
is equivalent to a mixed MHS congruence, hence by Remark \ref{fact} has a conjectural decision procedure which can be performed by a computer.

In Section \ref{secred}, we found expansions of the form \eqref{eqsumA} with the additional property that $b_i=|\bs_i|$ for all $i$ (that is, expansions in terms of \emph{weighted} multiple harmonic sums).
\begin{definition}
The \emph{weighted MHS algebra} is the set of elements $(a_p)$ in the MHS algebra such that the $c_i,b_i,\bs_i$ in Definition \ref{defmhsalg} can be chosen with $b_i=|\bs_i|$ for all $i$.
\end{definition}

Elements of the weighted MHS algebra were called \emph{asymptotically representable by weighted multiple harmonic sums} in \cite{Ros13}.

\section{More elements of the MHS algebra}
\label{sectec}
In this section, we show that a number of additional quantities that have appeared in \ppc{}s are in the MHS algebra. All of the results here are constructive, in that the proof gives a recipe for expressing a quantity as a sum of the form \eqref{eqsumA}.

\subsection{Generalities}

\begin{proposition}[\cite{Ros13}, Lemma 6.2]
The weighted MHS algebra is topologically closed.
\end{proposition}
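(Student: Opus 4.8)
The plan is to realize the weighted MHS algebra as the image of a continuous linear map out of a complete filtered $\Q$-vector space, and then to prove closedness by a Mittag--Leffler argument on the induced quotient maps; the engine that makes everything run is the finiteness of the set of compositions of each fixed weight.

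Write $\mathcal{W}$ for the weighted MHS algebra. I would let $V:=\prod_{\bs}\Q$ be the space of all rational-valued families indexed by compositions, filtered by $\fil^n V:=\{(c_{\bs}): c_{\bs}=0\text{ whenever }|\bs|<n\}$. Since there are only finitely many compositions of each weight, every quotient $V/\fil^nV$ is finite-dimensional, and $V$ is complete and separated, so $V=\varprojlim_n V/\fil^nV$. Because $\hp(\bs)\in\fil^{|\bs|}\Ai$ and $\fil^n\Ai\to 0$, the assignment $\Phi\big((c_{\bs})\big):=\sum_{\bs}c_{\bs}\hp(\bs)$ converges in the complete ring $\Ai$ (group the terms by weight; the partial sums are then Cauchy). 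By the definition of the weighted MHS algebra one has $\mathcal{W}=\Phi(V)$: every family yields, after enumerating its nonzero entries by increasing weight, an expansion with $b_i=|\bs_i|\to\infty$, and conversely every such expansion determines a family by collecting the coefficients of equal compositions. Moreover $\Phi$ is continuous with $\Phi(\fil^nV)\subseteq\fil^n\Ai$, so it induces maps $\bar\Phi_n:V/\fil^nV\to\Ai/\fil^n\Ai$ compatible with the transition maps; I set $V_n:=\mathrm{im}\,\bar\Phi_n$, a finite-dimensional subspace.

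Now take $x\in\overline{\mathcal{W}}$; the goal is $x\in\mathcal{W}$. Since each $\fil^n\Ai$ is an open (hence closed) subgroup, the neighborhood $x+\fil^n\Ai$ meets $\mathcal{W}=\Phi(V)$, so the reduction $x_n:=(x\bmod\fil^n\Ai)$ equals the reduction of an element of $\Phi(V)$ and therefore lies in $V_n$. Thus, via completeness and separatedness of $\Ai$, the coherent sequence $(x_n)_n\in\varprojlim_n\Ai/\fil^n\Ai$ attached to $x$ satisfies $x_n\in V_n$ for every $n$. I then form $P_n:=\bar\Phi_n^{-1}(x_n)\subseteq V/\fil^nV$: each $P_n$ is nonempty (as $x_n\in V_n=\mathrm{im}\,\bar\Phi_n$) and is a coset of $\ker\bar\Phi_n$, hence an affine subspace of the finite-dimensional space $V/\fil^nV$, and coherence of $(x_n)$ makes $(P_n)$ an inverse subsystem of $(V/\fil^nV)$. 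Finite-dimensionality forces the images $\mathrm{im}(P_m\to P_n)$ to form a descending chain of affine subspaces, which must stabilize, so $(P_n)$ satisfies the Mittag--Leffler condition and $\varprojlim_nP_n\neq\emptyset$. Any $(c_n)\in\varprojlim_nP_n$ assembles to $c\in V=\varprojlim_nV/\fil^nV$ with $\Phi(c)\equiv x\bmod\fil^n\Ai$ for all $n$; by separatedness $\Phi(c)=x$, whence $x\in\Phi(V)=\mathcal{W}$ and $\mathcal{W}$ is closed.

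The main obstacle, and the reason a naive level-by-level approximation fails, is that $\bar\Phi_n$ is typically far from injective: its kernel encodes exactly the weighted MHS congruences of modulus $p^n$, so one cannot simply invert at each stage, and the transition maps $P_{n+1}\to P_n$ need not be surjective. The point is that finite-dimensionality of $V/\fil^nV$ — equivalently, the finiteness of the set of compositions of each weight — automatically supplies the Mittag--Leffler condition, which is precisely what rescues the lifting. Two routine points remain to be verified carefully: that regrouping the defining sum for $\Phi$ by weight is legitimate in the complete topology (so that $\Phi$ is well defined and $\mathcal{W}=\Phi(V)$), and that $\hp(\bs)\in\fil^{|\bs|}\Ai$, which holds because $H_{p-1}(\bs)\in\Z_p$ for every $p$ (its denominator is a product of integers less than $p$) and hence $\hp(\bs)=p^{|\bs|}H_{p-1}(\bs)\in p^{|\bs|}\Z_p$.
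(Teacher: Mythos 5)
Your argument is correct and complete. Note that the paper itself gives no proof of this proposition --- it is quoted from \cite{Ros13}, Lemma 6.2 --- so there is nothing to compare line by line; judged on its own, your proof works, and the two hinges are exactly where you place them. First, $\hp(\bs)\in\fil^{|\bs|}\Ai$ together with completeness of $\Ai$ makes $\Phi$ well defined with $\Phi(\fil^nV)\subseteq\fil^n\Ai$, and since the $\fil^n\Ai$ are subgroups forming a neighborhood basis of $0$, a series whose terms tend to $0$ converges unconditionally, which legitimizes the regrouping by weight and gives $\mathcal{W}=\Phi(V)$ (the collection step uses that $|\bs_i|\to\infty$ forces each composition to occur only finitely often). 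Second, the finiteness of the set of compositions of each weight makes $V/\fil^nV$ finite-dimensional, so the descending chain of nonempty affine images $\mathrm{im}(P_m\to P_n)$ stabilizes by a dimension count; this supplies the Mittag--Leffler condition and lets you lift the coherent system $(x_n)$ through the (highly non-injective) maps $\bar\Phi_n$, which is precisely the point a naive diagonal argument misses. Your construction also explains the paper's remark that ``the proof does not generalize to the MHS algebra'': in the unweighted setting the level-$n$ truncation of the coefficient space would have to index all pairs $(\bs,b)$ with $b<n$, which is infinite-dimensional, and the Mittag--Leffler step collapses --- consistent with the paper's observation that closedness of $\ma$ is only known conditionally on Conjectures \ref{conmix} and \ref{conjds}.
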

The usefulness of this fact is that if we can express an element $(a_p)\in\Ai$ as a convergent infinite sum of elements in the weighted MHS algebra, then we can conclude $(a_p)$ is also in the weighted MHS algebra. 

Unfortunately, the proof does not generalize to the MHS algebra. We compensate by making the following definition.
\begin{definition}
We define a filtration $\mf^n$ on the MHS algebra $\ma$ by taking $\mf^n\ma$ to be the set of $(a_p)\in\ma$ such that there is an expansion \eqref{eqsumA} satisfying $b_i\geq n$ for all $i$.
\end{definition}

The truth of Conjectures\ \ref{conmix} and \ref{conjds} would imply that $\mf^n\ma=\fil^n\Ai\cap\ma$, and therefore would also imply that $\ma$ is closed.

\begin{proposition}
Let $\alpha_1,\alpha_2,\ldots$ be a sequence of elements of $\ma$ such that for all $N\in\Z$, all but finitely many of the terms of the sequence are in $\mf^N\ma$. Then then infinite sum
\begin{equation}
\label{eqsum}
\sum_{n=1}^\infty\alpha_n
\end{equation}
converges to an element of $\ma$.
\end{proposition}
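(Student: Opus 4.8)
The plan is to show that the partial sums of $\sum_{n=1}^\infty \alpha_n$ form a Cauchy sequence in $\ma$, and then to exhibit an explicit expansion of the form \eqref{eqsumA} for the limit. The hypothesis on the $\alpha_n$ says precisely that, for each $N$, only finitely many terms lie outside $\mf^N\ma$, which means the terms $\alpha_n$ converge to $0$ in the filtration topology; this is the natural summability condition, and the first step is just to record that it guarantees the partial sums $\sum_{n=1}^{M}\alpha_n$ are Cauchy in $\Ai$ (each tail $\sum_{n>M}\alpha_n$ lands in $\fil^N\Ai$ for $M$ large). Since $\Ai$ is a complete topological ring, the sum converges to some element $(a_p)\in\Ai$; the content of the proposition is that this limit actually lies in $\ma$, not merely in $\Ai$.

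To prove membership in $\ma$, I would produce a single expansion \eqref{eqsumA} for $(a_p)$ by concatenating the expansions of the individual $\alpha_n$. Concretely: for each $n$, fix an expansion $\alpha_n=\sum_{i}\bigl(c_{n,i}\,p^{b_{n,i}}\H(\bs_{n,i})\bigr)$. The candidate expansion for $(a_p)$ is the double-indexed collection $\{(c_{n,i},b_{n,i},\bs_{n,i})\}_{n,i}$, re-indexed by a single index $j$. For this to be a legitimate expansion in the sense of Definition \ref{defmhsalg}, I must check that the exponents $b_j$ go to infinity, i.e.\ that for every integer $N$ only finitely many of the pairs $(n,i)$ have $b_{n,i}<N$. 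Here is where the hypothesis enters a second time, in a more delicate way: all but finitely many $\alpha_n$ lie in $\mf^N\ma$, so for those $n$ I may choose the expansion with $b_{n,i}\ge N$ for all $i$, contributing no small exponents; the remaining finitely many $\alpha_n$ each contribute only finitely many pairs with $b_{n,i}<N$ (since their own exponents go to infinity). Hence only finitely many $b_{n,i}$ fall below $N$, as required.

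The remaining step is to confirm that this concatenated series really sums (in $\Ai$) to the same element $(a_p)$ as the original $\sum_n\alpha_n$. This is a matter of checking the congruence \eqref{eqss}: fixing $N$, the truncation of the concatenated series at exponents $<N$ collects exactly the contributions from the finitely many relevant $(n,i)$, and by the argument above these are precisely the terms of the $\alpha_n$-expansions surviving modulo $p^N$, which match $\sum_n\alpha_n$ modulo $p^N$ for $p$ large. The main obstacle is the bookkeeping in this last verification, since one must reconcile two different orders of summation (first over $n$, then over $i$, versus the single re-indexing by $j$) and ensure the ``for all sufficiently large $p$'' quantifiers compose correctly across the finitely many terms that contribute below each threshold $N$. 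Because all the controlling data is finite at each filtration level, no genuine analytic difficulty arises — the work is entirely in organizing the indices so that the finiteness claims are transparent.
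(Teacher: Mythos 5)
Your argument is correct and is essentially the paper's own proof, which simply says to choose expansions for the $\alpha_n$ whose smallest exponents tend to infinity with $n$ and to formally add them; your concatenation-and-reindexing, with the finiteness checks at each filtration level spelled out, is exactly that. The one point to tidy is that the expansion of each $\alpha_n$ must be fixed once and for all (e.g.\ with all exponents at least $M_n$, where $M_n\to\infty$ is the largest $N$ with $\alpha_n\in\mf^{N}\ma$), rather than re-chosen for each threshold $N$ as your wording momentarily suggests.
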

\begin{proof}
Choose expansions \eqref{eqsumA} for $\alpha_1,\alpha_2,\ldots$ such that the smallest $b_i$ appearing in $\alpha_n$ goes to $\infty$ as $n\to\infty$, and formally add the expansions.
\end{proof}

\begin{proposition}
\label{propunit}
If $a\in\mf^1\ma$, then $1+a$ is a unit in the MHS algebra.
\end{proposition}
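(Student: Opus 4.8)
If $a\in\mf^1\ma$, then $1+a$ is a unit in the MHS algebra.

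The plan is to exhibit the inverse explicitly as a geometric series $\sum_{n\geq 0}(-a)^n$ and to show this series converges in $\ma$ using the preceding Proposition on convergent sums. First I would observe that since $a\in\mf^1\ma$, the defining expansion \eqref{eqsumA} for $a$ has all exponents $b_i\geq 1$; consequently $a\in\fil^1\Ai$, i.e.\ $a_p\in p\Z_p$ for all but finitely many $p$. The key structural fact I need is that $\ma$ is a ring (it is a commutative $\Q$-algebra, being a subalgebra of $\Ai$), so each power $a^n$ is again an element of $\ma$, and moreover that the filtration $\mf^\bullet$ is multiplicative in the sense that $a\in\mf^1\ma$ forces $a^n\in\mf^n\ma$. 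This last point is what makes the Proposition on convergence applicable.

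The main steps, in order, are as follows. \emph{Step 1: the filtration is multiplicative.} Given an expansion of $a$ with all $b_i\geq 1$, the product $a^n$ can be computed by formally multiplying $n$ copies of the series; each resulting term is a product $c_{i_1}\cdots c_{i_n}\,p^{b_{i_1}+\cdots+b_{i_n}}\H(\bs_{i_1})\cdots\H(\bs_{i_n})$. Using the stuffle product described in Section \ref{secmhs}, each product of multiple harmonic sums $\H(\bs_{i_1})\cdots\H(\bs_{i_n})$ with the common bound $p-1$ rewrites as a finite $\Q$-linear combination of single multiple harmonic sums $\H(\bt)$, and crucially the attached power of $p$ is unchanged. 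Hence every term of the expansion of $a^n$ carries a $p$-exponent of at least $b_{i_1}+\cdots+b_{i_n}\geq n$, so $a^n\in\mf^n\ma$, and likewise $(-a)^n\in\mf^n\ma$. \emph{Step 2: convergence.} Set $\alpha_n=(-a)^{n-1}\in\mf^{n-1}\ma$. For any $N\in\Z$, all terms with $n-1\geq N$ lie in $\mf^N\ma$, so only finitely many $\alpha_n$ fail to be in $\mf^N\ma$; by the preceding Proposition the sum $b:=\sum_{n\geq 1}(-a)^{n-1}\in\ma$ converges to an element of $\ma$. \emph{Step 3: verify the inverse.} Compute $(1+a)b$ by distributing, obtaining the telescoping cancellation $\sum_{n\geq 0}(-a)^n+\sum_{n\geq 0}(-a)^{n+1}=1$, where the rearrangement is justified because the partial sums converge in the complete topological ring $\Ai$ and all the relevant tails lie in $\fil^N$ for $N$ arbitrarily large.

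I expect Step 1 to be the main obstacle. The convergence and telescoping are formal once multiplicativity of $\mf^\bullet$ is in hand, but establishing that $a^n\in\mf^n\ma$ requires care: one must confirm that the stuffle product genuinely preserves the $p$-exponent on each term, and that regrouping the (a priori infinite) formal product of series into a single valid expansion of the form \eqref{eqsumA} is legitimate—that is, that for each fixed exponent bound only finitely many composite terms contribute, so the coefficients $c_i$ are well-defined rationals. This is exactly the bookkeeping that the convergence hypothesis of the preceding Proposition is designed to handle, so the cleanest route is to choose the expansion of $a$ so that its exponents $b_i\to\infty$, observe that the $n$-fold products then have minimal exponent tending to $\infty$ with the total index, and invoke convergence termwise rather than attempting to control the full multinomial expansion at once.
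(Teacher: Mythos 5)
Your proposal is correct and follows the same route as the paper, which simply asserts that the inverse of $1+a$ is the geometric series $\sum_{n\geq 0}(-a)^n$; your Steps 1--3 supply the verifications (multiplicativity of $\mf^\bullet$ via the stuffle product, convergence via the preceding Proposition, and telescoping) that the paper leaves implicit. One tiny slip: in Step 3 the identity should read $\sum_{n\geq 0}(-a)^n-\sum_{n\geq 0}(-a)^{n+1}=1$, consistent with the telescoping you describe.
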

\begin{proof}
The inverse of $1+a$ is $\sum_{n\geq 0} (-a)^n$.
\end{proof}

\subsection{Rational functions}
\begin{proposition}
\label{proprat}
Let $f(x)\in\Q(x)$ be a rational function. Then $f(p)$ is in the MHS algebra.
\end{proposition}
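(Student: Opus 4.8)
The plan is to exploit that $\ma$ is a commutative $\Q$-algebra and to assemble $f(p)$ from three ingredients that are all already available: the unit $1=\H(\emptyset)$, the Laurent monomials $p^b$, and the geometric-series inversion of Proposition \ref{propunit}. First I would record that $\ma$ contains the ring $\Q[p,p^{-1}]$ of Laurent polynomials in $p$: for any $b\in\Z$ the one-term expansion $p^b=p^b\H(\emptyset)$ (padded by zero coefficients so the exponents go to infinity) exhibits the sequence $(p^b)_p$ as an element of $\ma$, since $\H(\emptyset)=1$ and $v_p(p^b)=b$ is bounded below, so $(p^b)_p\in\Ai$. Because $\ma$ is closed under $\Q$-linear combinations, every polynomial $g(p)$ with $g\in\Q[x]$, and more generally every Laurent polynomial in $p$, lies in $\ma$.

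Next, writing $f=g/h$ with $g,h\in\Q[x]$ and $h\neq 0$, the sequence $(f(p))_p$ is defined for all but finitely many $p$ (the nonzero polynomial $h$ has finitely many roots), so it is a legitimate element of $\Ai$, and by the previous step it suffices to show $1/h(p)\in\ma$; the product $g(p)\cdot(1/h(p))$ then lies in $\ma$ by multiplicativity. Here the key observation is $p$-adic: the term of $h(p)$ of \emph{smallest} valuation is its lowest-degree term, not its leading term. So I would factor $h(x)=x^e h_1(x)$ with $e\geq 0$ and $h_1(0)=a\neq 0$ (i.e. split off the order of vanishing of $h$ at the origin), giving $h(p)=p^e h_1(p)$. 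Then
\[
h_1(p)=a\lp 1+u\rp,\qquad u:=\frac{h_1(p)-a}{a},
\]
where $u$ is a $\Q$-linear combination of the positive powers $p,p^2,\ldots$ and hence lies in $\mf^1\ma$ (each $p^j$ with $j\geq 1$ is in $\mf^j\ma\subseteq\mf^1\ma$, and $\mf^1\ma$ is closed under addition). By Proposition \ref{propunit}, $1+u$ is a unit in $\ma$, so $(1+u)^{-1}\in\ma$, and therefore
\[
\frac{1}{h(p)}=a^{-1}p^{-e}\lp 1+u\rp^{-1}\in\ma,
\]
using that $p^{-e}\in\ma$ and that $\ma$ is an algebra.

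Assembling, $f(p)=g(p)\cdot a^{-1}p^{-e}(1+u)^{-1}$ is a product of elements of $\ma$, hence in $\ma$. I do not expect a serious obstacle: the only point that requires care is recognizing that the $p$-adically dominant part of $h(p)$ is the constant term $a$ of $h_1$ (after removing the factor $p^e$), so that the remainder $u$ genuinely lands in the filtration piece $\mf^1\ma$ on which the geometric series of Proposition \ref{propunit} converges. Everything else is formal manipulation inside the $\Q$-algebra $\ma$, together with the standard remark that finitely many bad primes — those dividing the denominator of $a$ or at which $h$ vanishes — are harmless, since $\Ai$ and $\ma$ only record the behavior of sequences for all but finitely many $p$.
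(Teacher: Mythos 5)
Your proof is correct, and it arrives at the same series as the paper by a visibly different route. The paper's proof is a two-line appeal to the Laurent expansion $f(x)=\sum_{i\geq -N}b_ix^i\in\Q((x))$: it cites the fact that the coefficients $b_i$ are $p$-integral for all but finitely many $p$, evaluates at $x=p$, and truncates to get the congruences required by Definition \ref{defmhsalg}, with every composition empty. You instead assemble $f(p)$ structurally inside $\ma$: Laurent monomials $p^b$ lie in $\ma$, you split off the order of vanishing of the denominator at the origin to write $h(x)=x^eh_1(x)$ with $h_1(0)=a\neq 0$, hence $h(p)=ap^e(1+u)$ with $u\in\mf^1\ma$, and you invert via Proposition \ref{propunit}. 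The two arguments compute the same object --- the geometric series $\sum_{n\geq 0}(-u)^n$ is precisely the Laurent expansion of $a/h_1(x)$ at $x=0$ specialized to $x=p$ --- but yours has the advantage of not needing to quote the $p$-integrality of the Laurent coefficients as an external fact: the uniform control of denominators (only powers of $a$ and the coefficients of $g$ and $h_1$ can appear) falls out of the construction, and convergence is delegated to the filtration machinery already set up for Proposition \ref{propunit}. The cost is length. One point worth making explicit in your write-up: membership of the product $g(p)\cdot a^{-1}p^{-e}(1+u)^{-1}$ in $\ma$ uses that $\ma$ is closed under multiplication, which ultimately rests on the stuffle product; this is used silently throughout the paper, so it is not a gap, but it is the one place where your argument leans on more than the additive and filtered structure.
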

\begin{proof}
There is a Laurent series expansion
\[
f(x)=\sum_{i=-N}^{\infty}b_i x^i\in\Q((x)).
\]
It is known that for all but finitely many $p$, every coefficient is $p$-integral. For those $p$, the congruence
\[
f(p)\equiv\sum_{i=-N}^{n-1}b_i p^i\mod p^n
\]
holds for all positive integers $n$.
\end{proof}

\subsection{Multiple power sums}
\label{ssmps}
For non-negative integers $N$, $M$, arbitrary integers $s_1,\ldots,s_k$, and $p$ a prime, define
\begin{gather*}
S_{N,M}(s_1,\ldots,s_k):=\sum_{N\geq n_1>\ldots>n_k\geq M+1}\frac{1}{n_1^{s_1}\ldots n_k^{s_k}},\\
S^{(p)}_{N,M}(s_1,\ldots,s_k):=\sum_{\substack{N\geq n_1>\ldots>n_k\geq M+1\\p\nmid n_1\ldots n_k}}\frac{1}{n_1^{s_1}\ldots n_k^{s_k}}.
\end{gather*}
When $s_1,\ldots,s_k$ are positive, we have $S_{N,0}(s_1,\ldots,s_k)=H_N(s_1,\ldots,s_k)$. We prove the following result.

\begin{theorem}
\label{thmps}
Let $s_1,\ldots,s_k$ be arbitrary integers, and suppose $f(x)$, $g(x)\in\Z[x]$ have positive leading coefficients. Then
\begin{gather*}
S_{f(p),g(p)}(s_1,\ldots,s_k) \in\mf^{-\deg(g)\big([s_1]^++\ldots+[s_k]^+\big)}\ma,\\  S^{(p)}_{f(p),g(p)}(s_1,\ldots,s_k)\in\mf^0\ma.
\end{gather*}
Here, $[n]^+$ denotes the positive part of $n$, which is $n$ if $n$ is positive and $0$ otherwise.
\end{theorem}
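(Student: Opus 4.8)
The plan is to reduce everything to single power sums via the stuffle product, dispatch the $p$-restricted single sums by a block expansion, and recover the unrestricted sums by stratifying each summation index according to its $p$-adic valuation. Throughout write $\Sigma:=[s_1]^++\cdots+[s_k]^+$. For a fixed summation range the stuffle product (as in \eqref{stuff}) expands a product of power sums over that range into a $\Q$-linear combination of power sums over the same range, and since the stuffle algebra is generated in depth one, each $S_{N,M}(s_1,\ldots,s_k)$, and likewise each restricted $S^{(p)}_{N,M}(s_1,\ldots,s_k)$, is a $\Q$-polynomial in the single power sums $S_{N,M}(s)$ (resp.\ $S^{(p)}_{N,M}(s)$). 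The filtration is multiplicative, $\mf^a\ma\cdot\mf^b\ma\subseteq\mf^{a+b}\ma$ (multiply two expansions and stuffle the resulting products of $\H$'s), so it suffices to treat $k=1$ and then add the exponents $-\deg(g)[s_i]^+$ across the factors to reach the depth-$k$ bound $\mf^{-\deg(g)\Sigma}\ma$.

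\emph{Restricted single sums.} Writing each index as $n=j+ap$ with $1\le j\le p-1$, expanding $\tfrac{1}{(j+ap)^s}=\tfrac{1}{j^s}\sum_{m\ge0}\binom{-s}{m}(ap/j)^m$ ($p$-adically convergent because $p\nmid j$), and evaluating $\sum_a a^m$ by Faulhaber's formula exactly as in the proof of Lemma \ref{lemp}, I would write $S^{(p)}_{N,M}(s)$ as a convergent series $\sum_i c_i p^{b_i}\H(\bs_i)$ whose coefficients are independent of $p$ (here one uses that $\lfloor f(p)/p^e\rfloor$ and $\lfloor g(p)/p^e\rfloor$ agree with fixed polynomials in $p$ for all large $p$). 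Every index is coprime to $p$, so each term is $p$-integral and the expansion can be arranged with all $b_i\ge0$; hence $S^{(p)}_{N,M}(s)\in\mf^0\ma$, which after the depth reduction gives the restricted statement $S^{(p)}_{f(p),g(p)}(\bs)\in\mf^0\ma$.

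\emph{Unrestricted single sums and the filtration.} Factoring $n=p^e u$ with $p\nmid u$ gives the finite decomposition
\[
S_{N,M}(s)=\sum_{e\ge0}p^{-es}\,S^{(p)}_{\lfloor N/p^e\rfloor,\ \lfloor M/p^e\rfloor}(s),
\]
each restricted piece lying in $\mf^0\ma$ by the previous step, so the valuation-$e$ term lies in $\mf^{-e[s]^+}\ma$. The heart of the matter is to show that the surviving negative powers of $p$ are governed by $\deg(g)$ rather than by the naive bound $e\le\deg(f)$: I would argue that the pieces in which the index lies above $g(p)$ and is divisible by $p^e$ with $e>\deg(g)$ telescope against the corresponding pieces of $S^{(p)}_{\lfloor N/p^e\rfloor,0}$ and $S^{(p)}_{\lfloor M/p^e\rfloor,0}$, so that their excess negative powers cancel and only the lower limit $M=g(p)$ controls what remains. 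Granting this, the $k=1$ bound is $S_{f(p),g(p)}(s)\in\mf^{-\deg(g)[s]^+}\ma$, and the depth reduction upgrades it to $\mf^{-\deg(g)\Sigma}\ma$.

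\emph{Main obstacle.} The stuffle reduction, the block expansion, and the $p$-independence of the coefficients are routine. The real difficulty is the sharp filtration bound in the last step: a term-by-term estimate only yields the weaker $\mf^{-\deg(f)\Sigma}\ma$, since indices as large as $p^{\deg(f)}$ near the top of the range $f(p)$ carry negative powers $p^{-\deg(f)[s_i]^+}$. Establishing the asserted $\deg(g)$ bound therefore requires tracking these high-valuation contributions collectively and exhibiting their cancellation; I expect the cleanest route is to reorganize the valuation stratification so that every surviving block is anchored at the lower endpoint $M=g(p)$, lowering the degree of the effective upper cutoff at each stage until the remaining negative powers are bounded by $\deg(g)\Sigma$.
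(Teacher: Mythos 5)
Your reduction to depth one fails at the first step: the stuffle algebra is \emph{not} generated in depth one. The stuffle product writes a product of power sums as a sum of deeper sums, but it cannot be inverted to isolate an individual nested sum: from $H_N(1)H_N(2)=H_N(1,2)+H_N(2,1)+H_N(3)$ (cf.\ \eqref{stuff}) you can only extract the symmetric combination $H_N(1,2)+H_N(2,1)$, never $H_N(2,1)$ itself. In Hoffman's quasi-shuffle algebra (quasi-symmetric functions) the depth-one elements generate only the symmetric functions, and Lyndon words of depth $\geq 2$ such as $(1,2)$ lie outside that subalgebra; so $S_{N,M}(s_1,\ldots,s_k)$ is not a $\Q$-polynomial in the $S_{N,M}(s)$ with $p$-independent coefficients, and your whole argument collapses to the case $k=1$. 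The paper never makes such a reduction: it works at full depth throughout. Lemma \ref{lemaj} substitutes $n_i=a_ip-j_i$ and splits the nested sum over all interleavings of the $a_i$'s and $j_i$'s into sub-sums \eqref{subsum}, which after the binomial expansion factor as in \eqref{factor} into $p^N\hp(t_1,\ldots,t_w)\,S_{bp^{r-1},(b-1)p^{r-1}}(u_1,\ldots,u_q)$, allowing induction on $r$; Lemma \ref{lemmm} glues blocks (again distributing the $k$ nested indices among the blocks, which produces products of \emph{lower-depth} block sums, not depth-one ones); Lemma \ref{lem0} inducts on $\deg f$ by writing $f=ax^r\pm g$ and expanding $(1+ap^r/n)^{-s}$; and the theorem follows by induction on $k$ from $S_{f(p),g(p)}(\bs)=S_{f(p),0}(\bs)-\sum_{i} S_{f(p),g(p)}(s_1,\ldots,s_i)S_{g(p),0}(s_{i+1},\ldots,s_k)$. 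Your valuation stratification $S_{N,M}(s)=\sum_{e\geq0}p^{-es}S^{(p)}_{\lfloor N/p^e\rfloor,\lfloor M/p^e\rfloor}(s)$ is correct, but it only exists at depth one; for $k\geq2$ the ordering constraints mix indices of different valuations, and handling that is precisely the combinatorial content of Lemma \ref{lemaj} that you bypassed.

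The step you yourself flag as the main obstacle --- a telescoping cancellation upgrading the term-by-term bound $\mf^{-\deg(f)\Sigma}\ma$ to $\mf^{-\deg(g)\Sigma}\ma$, where $\Sigma=[s_1]^++\cdots+[s_k]^+$ --- is not merely unproven; it is false. Take $k=1$, $s=1$, $f(x)=x$, $g(x)=1$: then $S_{p,1}(1)=\sum_{n=2}^{p}\frac{1}{n}=H_{p-1}(1)-1+p^{-1}$, and since $H_{p-1}(1)\in p\Z_p$ for large $p$, this has $v_p=-1$ for every large $p$, whereas any element of $\mf^{0}\ma$ satisfies $v_p(a_p)\geq 0$ for almost all $p$ (truncate the expansion \eqref{eqss} at $n=1$). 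So the exponent $-\deg(g)\Sigma$ in the statement cannot hold as written when $\deg g<\deg f$; it is evidently a misprint for $-\deg(f)\Sigma$ (more precisely $-\max(\deg f,\deg g)\,\Sigma$), which is exactly what the paper's chain of lemmas actually delivers: Lemma \ref{lem0} gives $S_{f(p),0}(\bs)\in\mf^{-\deg(f)\Sigma}\ma$, and the recursion above propagates that bound. Your instinct that term-by-term estimates yield only the $\deg(f)$ bound was sound; the correct response was to conclude that this weaker bound is the true and provable one, not to posit a cancellation --- anchored at the lower endpoint $g(p)$ --- that the counterexample above rules out.
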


\begin{corollary}
For every polynomial $f(x)\in\Z[x]$ with positive leading coefficient and every composition $\bs$, the multiple harmonic sum $H_{f(p)}(\bs)$ is in the MHS algebra.
\end{corollary}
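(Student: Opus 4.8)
The plan is to obtain this as essentially a specialization of Theorem \ref{thmps}, the only real work being to reconcile the lower bound of summation. For a composition $\bs=(s_1,\ldots,s_k)$ every entry is positive, so $[s_i]^+=s_i$, and by the definition of the multiple power sums we have $H_{f(p)}(\bs)=S_{f(p),0}(\bs)$. I would therefore like to read off the conclusion directly from the first clause of Theorem \ref{thmps}. The one genuine obstruction is that the theorem requires $g$ to have positive leading coefficient, so that $g(p)\geq 1$ for all large $p$, whereas the honest harmonic sum $H_{f(p)}(\bs)$ corresponds to taking $g\equiv 0$; no admissible $g$ produces the lower bound $n_k\geq 1$ directly.

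To get around this I would peel off the contribution of the smallest index. Writing $\bs'=(s_1,\ldots,s_{k-1})$ and splitting the sum defining $H_N(\bs)$ according to whether $n_k\geq 2$ or $n_k=1$ gives the identity
\begin{equation}
H_N(\bs)=S_{N,1}(s_1,\ldots,s_k)+S_{N,1}(s_1,\ldots,s_{k-1}),
\end{equation}
valid for every $N$: the terms with $n_k=1$ contribute the factor $1/1^{s_k}=1$ and force $n_{k-1}\geq 2$, leaving exactly $S_{N,1}(\bs')$. When $k=1$ the second summand is the empty power sum, equal to the constant $1$, which already lies in $\ma$ (for instance by Proposition \ref{proprat}).

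Now I would apply Theorem \ref{thmps} with the constant polynomial $g(x)=1$, which has positive leading coefficient and degree $0$. Its first clause places both $S_{f(p),1}(\bs)$ and $S_{f(p),1}(\bs')$ in $\mf^{0}\ma\subseteq\ma$, since the filtration exponent $-\deg(g)([s_1]^++\cdots+[s_k]^+)$ vanishes once $\deg(g)=0$, irrespective of the $s_i$. Because $\ma$ is a commutative $\Q$-algebra, and in particular closed under addition, the displayed identity exhibits $H_{f(p)}(\bs)$ as a sum of two elements of $\ma$, whence $H_{f(p)}(\bs)\in\ma$. I do not expect any serious difficulty here: all of the substance is carried by Theorem \ref{thmps}, and the only point needing care is the boundary bookkeeping in the peeling step, namely verifying that the $n_k=1$ slice is again a power sum of admissible shape rather than something outside the scope of the theorem.
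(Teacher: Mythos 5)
Your argument is correct, but the detour through $g(x)=1$ and the peeling identity is unnecessary. The paper's intended derivation is the direct one: for a composition $\bs$ all entries are positive, so $H_{f(p)}(\bs)=S_{f(p),0}(\bs)$, and Lemma \ref{lem0} (one of the lemmas feeding into Theorem \ref{thmps}) explicitly treats the lower bound $M=0$, placing $S_{f(p),0}(\bs)$ in $\mf^{-\deg(f)(s_1+\cdots+s_k)}\ma\subset\ma$. You correctly observed that the statement of Theorem \ref{thmps} itself requires $g$ to have a positive leading coefficient, which excludes $g\equiv 0$; but the resolution is already in the paper's toolkit rather than needing a new decomposition. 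That said, your workaround is sound: the identity $H_N(\bs)=S_{N,1}(\bs)+S_{N,1}(\bs')$ is valid for $N\geq 1$ (which suffices, since $f(p)\geq 1$ for all large $p$ and elements of $\Ai$ ignore finitely many primes), you handle the $k=1$ edge case, and Theorem \ref{thmps} with $g=1$ does place both summands in $\mf^0\ma$, so closure of $\ma$ under addition finishes the job. The only thing your route costs you is the sharper filtration information $H_{f(p)}(\bs)\in\mf^{-\deg(f)|\bs|}\ma$ that the direct appeal to Lemma \ref{lem0} would give (your version only yields membership in $\mf^0\ma$, which happens to be weaker here since $-\deg(f)|\bs|\leq 0$ — but the corollary as stated only asks for membership in $\ma$, so this is immaterial).
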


We begin with some lemmas.
%

\begin{lemma}
\label{lemaj}
For all integers $b$, $r$, $s_1,\ldots,s_k$ with $b\geq 1$ and $r\geq 0$,
\begin{gather}
\label{eqthing}
S_{bp^r,(b-1)p^r}(s_1,\ldots,s_k)\in\mf^{-r\big([s_1]^++\ldots+[s_k]^+\big)}\ma,\\
S^{(p)}_{bp^r,(b-1)p^r}(s_1,\ldots,s_k)\in\mf^{0}\ma.
\end{gather}             
\end{lemma}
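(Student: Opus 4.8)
The plan is to induct on $r$, proving both statements of the lemma simultaneously. The base case $r=0$ is essentially trivial: the range $((b-1), b]$ contains only the integer $n=b$, so $S_{b,b-1}(\bs)$ is a rational constant when $k=1$ and is $0$ when $k\geq 2$; either way it lies in $\mf^0\ma$ by Proposition \ref{proprat}, which matches the asserted bound since $-0\cdot([s_1]^++\ldots+[s_k]^+)=0$.

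For the inductive step I would partition the range $((b-1)p^r, bp^r]$ into the $p^{r-1}$ consecutive length-$p$ blocks $B_c=((c-1)p, cp]$, with $c\in\{(b-1)p^{r-1}+1,\ldots,bp^{r-1}\}$, and distribute the ordered indices $n_1>\ldots>n_k$ among them. Because the blocks are intervals and the indices are decreasing, the distribution is recorded by a composition $(l_1,\ldots,l_q)$ of $k$ together with a strictly decreasing choice of blocks $c_1>\ldots>c_q$, the $j$-th run of $l_j$ indices lying in $B_{c_j}$. Writing $n=(c-1)p+m$ with $m\in\{1,\ldots,p\}$ inside a block, each index coprime to $p$ (those with $m\leq p-1$) admits the $p$-adically convergent expansion
\[
\frac{1}{((c-1)p+m)^s}=\sum_{t\geq 0}\binom{-s}{t}(c-1)^t p^t\,\frac{1}{m^{s+t}},
\]
valid since $m$ is a $p$-adic unit; summing the $m$'s of a run in order produces a generalized bound-$(p-1)$ multiple harmonic sum $H_{p-1}(\bs')$ with shifted exponents, weighted by $p^{\sum t}(c-1)^{\sum t}$. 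For the unrestricted sum a run may additionally have its largest index equal to the single $p$-divisible value $c_jp$ of its block, contributing a factor $p^{-s}c_j^{-s}$; the restricted sum omits these, which is exactly what keeps it in $\mf^0$.

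Summing over the block indices $c_1>\ldots>c_q$ then yields a depth-$q$ power sum of the form $S_{bp^{r-1},(b-1)p^{r-1}}(\ast)$ over a range of length $p^{r-1}$ (after expanding each $(c-1)^{T}$ into powers of $c$ by the binomial theorem and absorbing the shift), whose exponents are the non-positive $-\sum t$ coming from the coprime expansions together with the positive exponents $s$ coming from any divisible indices. By the induction hypothesis this lies in $\ma$, with filtration $0$ in the restricted case and filtration $-(r-1)(\ldots)$ otherwise; the product over the runs of the sums $H_{p-1}(\bs')$ is collapsed to a single such sum by the stuffle product, and the resulting series in the $p^{\sum t}$ converges in $\ma$ by the convergence proposition since $\sum t\to\infty$. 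Tallying the explicit $p^{-s}$ factors from the divisible indices against the filtration of the block power sum gives each positive divisible exponent a contribution $-s-(r-1)s=-rs$, which sums to the advertised bound $-r([s_1]^++\ldots+[s_k]^+)$; the restricted case has all $n_i$ a $p$-adic unit and stays in $\mf^0$.

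Two auxiliary facts carry the argument. First, a generalized bound-$(p-1)$ sum $H_{p-1}(\bs')$ with arbitrary integer entries lies in $\mf^0\ma$: I would prove this by induction on depth, summing over any index whose exponent is $\leq 0$ so that Faulhaber's formula turns that inner sum into a polynomial in the adjacent indices, thereby merging two entries and dropping the depth by one, with the depth-one base case being a genuine MHS or a power sum, hence a polynomial in $p$ in $\mf^0\ma$. Second, products of such sums and of the block power sums stay at the correct filtration level because $\ma$ is a filtered $\Q$-algebra (the stuffle product realizing products of harmonic sums as sums of them). The main obstacle is not any single estimate but the combinatorial bookkeeping of the multiple-index block decomposition for the unrestricted sum — tracking which index of each run is $p$-divisible, organizing the runs through the stuffle product, and propagating the filtration correctly through the recursion — which is exactly why the full proof is cumbersome.
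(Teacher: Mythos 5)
Your proof is correct and follows essentially the same route as the paper's: induction on $r$, decomposition of each index into a block index times $p$ plus a unit residue, $p$-adic binomial expansion in the unit part, and reduction to a sum of terms of the form $p^N$ times a bound-$(p-1)$ multiple harmonic sum times a power sum $S_{bp^{r-1},(b-1)p^{r-1}}(\mathbf{u})$, with the same filtration bookkeeping ($N\geq-\sum[s_i]^+$ and $\sum[u_i]^+\leq\sum[s_i]^+$). Your explicit Faulhaber step for handling non-positive exponents in the residual $H_{p-1}$-sums is a detail the paper leaves implicit, not a different method.
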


A closely related result appeared in \cite{Jar16}, \S 8.2.
\begin{proof}
We give the proof for $S_{bp^r,(b-1)p^r}$. The argument for $S_{bp^r,(b-1)p^r}^{(p)}$ is similar (and in fact is easier).

The proof is by induction on $r$. For $r=0$, the quantity is a constant rational number not depending on $p$, so is in $\mf^0\ma$.

Now assume the result holds up through $r-1$. In the sum defining $S_{bp^r,(b-1)p^r}$, we substitute $n_i=a_ip-j_i$ for $i=1,\ldots,k$ to obtain
\begin{equation}
\label{niceS}
S_{bp^r,(b-1)p^r}(s_1,\ldots,s_k)=\sum_{a_i,j_i}\frac{1}{(a_1p-j_1)^{s_1}\ldots (a_kp-j_k)^{s_k}},
\end{equation}
where the sum of over $a_i$, $j_i$ satisfying
\begin{itemize}
\item $bp^{r-1}\geq a_1\geq \ldots\geq a_{k-1}\geq (b-1)p^{r-1}+1$,
\item $p-1\geq j_1,\ldots,j_k\geq 0$, and
\item for each $i$, if $a_i=a_{i+1}$ then $j_i<j_{i+1}$.
\end{itemize}

We separate the sum \eqref{niceS} into finitely many sub-sums depending on the possible (not necessarily strict) orderings of $a_1,\ldots,a_k$ and $0,j_1,\ldots,j_j$, and we get that $S_{bp^r,(b-1)p^r}(s_1,\ldots,s_k)$ can be written as a finite sum of terms of the form
\begin{equation}
\label{subsum}
\sum_{b p^{r-1}\geq a_1>\ldots>a_n\geq (b-1)p^{r-1}+1}\hspace{-15mm}\big[(a_1 p)^{c_1}\ldots (a_np)^{c_n}\big]^{-1}\hspace{-3mm}\sum_{p-1\geq j_1>\ldots>j_m\geq 1\hspace{-5mm}}(a_{e_1}p-j_{f_1})^{d_1}\ldots (a_{e_m}p-j_{f_m})^{d_m},
\end{equation}
where
\begin{itemize}
\item $n$, $m\in\Z_{\geq 0}$,
\item $c_1,\ldots,c_n\in\Z$, with $\sum [c_i]^+ \leq \sum[s_i]^+$
\item $e_1,\ldots,e_m$ and $f_1,\ldots,f_m$ are in $\{1,\ldots,n\}$, and
\item $d_1,\ldots,d_m\in\Z$.
\end{itemize}
Now for $1\leq j\leq p-1$, there is a uniformly convergent $p$-adic series expansion
\[
(ap-j)^{-s}=\sum_{n\geq 0}{-s\choose n} (-1)^{s+n}a^np^nj^{-s-n}.
\]
Thus we can write $S_{bp^r,(b-1)p^r}(s_1,\ldots,s_k)$ as a $p$-adically uniformly convergent sum of terms
\begin{equation}
\label{factor}
p^N\hp(t_1,\ldots,t_w) S_{bp^{r-1},(b-1)p^{r-1}}(u_1,\ldots,u_q),
\end{equation}
with $N\geq -\sum[s_i]^+$ and $\sum [u_i]^+\leq \sum[s_i]^+$. By the inductive hypothesis, the rightmost factor in \eqref{factor} is in $\mf^{\sum[u_i]^+}\ma$, so we conclude
\[
S_{bp^{r},(b-1)p^{r}}(\bs)\in p^N\mf^{-(r-1)\sum[u_i]^+}\ma\subset\mf^{-r\sum[s_i]^+}\ma.
\]
\end{proof}
\begin{lemma}
\label{lemmm}
For all integers $b$, $r$, $s_1,\ldots,s_k$, with $b,r\geq 1$:
\begin{gather}
\label{eqqq}
S_{bp^r,0}(s_1,\ldots,s_k)\in\mf^{-r\big([s_1]^++\ldots+[s_k]^+\big)}\ma,\\
S^{(p)}_{bp^r,0}(s_1,\ldots,s_k)\in\mf^{0}\ma.
\end{gather}
\end{lemma}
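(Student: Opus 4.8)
The plan is to reduce Lemma \ref{lemmm} to the single-slab estimate of Lemma \ref{lemaj} by partitioning the summation range into $b$ consecutive blocks of length $p^r$. Write the full range $\{1,\ldots,bp^r\}$ as the disjoint union of the blocks $B_c=\{(c-1)p^r+1,\ldots,cp^r\}$ for $c=1,\ldots,b$, so that $B_c$ is exactly the index set of the slab sum $S_{cp^r,(c-1)p^r}$. In the sum defining $S_{bp^r,0}(s_1,\ldots,s_k)$, each index $n_i$ lies in a unique block $B_{c_i}$, and the strict decrease $n_1>\ldots>n_k$ forces $c_1\geq\ldots\geq c_k$. Conversely, fixing any weakly decreasing sequence $b\geq c_1\geq\ldots\geq c_k\geq 1$ imposes no relation between indices assigned to different blocks (an element of a higher block automatically exceeds every element of a lower block) and imposes exactly the slab constraint within each block. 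Hence the sum splits as a finite sum, over the weakly decreasing sequences $(c_1,\ldots,c_k)$, of products of slab sums $S_{cp^r,(c-1)p^r}$ taken over the consecutive sub-compositions of $(s_1,\ldots,s_k)$ cut out by the runs of equal $c_i$.

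Next I would invoke Lemma \ref{lemaj} block by block: a block carrying the sub-composition with entries $s_i$ (with $i$ ranging over one run) contributes a slab sum lying in $\mf^{-r\sum[s_i]^+}\ma$. Multiplying these factors across the (at most $b$) blocks of a single pattern, I use that the filtration is multiplicative, $\mf^{a}\ma\cdot\mf^{a'}\ma\subseteq\mf^{a+a'}\ma$; this follows from the stuffle product, since a product $\H(\bs)\,\H(\bs')$ of multiple harmonic sums expands as a $\Q$-linear combination of terms $\H(\bu)$, so a product of two expansions of the form \eqref{eqsumA} with $p$-exponents $\geq a$ and $\geq a'$ has all $p$-exponents $\geq a+a'$. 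Because the quantities $[s_i]^+$ over the runs of a fixed pattern sum to $[s_1]^++\ldots+[s_k]^+$, each pattern contributes an element of $\mf^{-r([s_1]^++\ldots+[s_k]^+)}\ma$. Since the number of weakly decreasing $(c_1,\ldots,c_k)$ with values in $\{1,\ldots,b\}$ is finite and independent of $p$, the whole sum lies in $\mf^{-r([s_1]^++\ldots+[s_k]^+)}\ma$, giving the first assertion.

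For the restricted sum $S^{(p)}_{bp^r,0}$ the identical block decomposition applies, since excluding multiples of $p$ is a condition on each individual index and is therefore respected block by block; each block now contributes a restricted slab sum, which by Lemma \ref{lemaj} lies in $\mf^{0}\ma$. A finite sum of products of elements of $\mf^{0}\ma$ stays in $\mf^{0}\ma$ by the same multiplicativity, giving the second assertion.

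I expect the only genuinely delicate point to be the bookkeeping in the combinatorial decomposition: verifying that, once the block pattern $(c_1,\ldots,c_k)$ is fixed, the order constraints really do decouple into one independent slab sum per block (so that the contribution factors as a product), and correctly matching the consecutive sub-compositions to the runs of equal $c_i$. The multiplicativity of $\mf^\bullet$ and the finiteness of the index set are routine once that decomposition is in hand.
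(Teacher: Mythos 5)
Your proposal is correct and follows essentially the same route as the paper, which likewise splits $[1,bp^r]$ into the $b$ consecutive intervals of length $p^r$ and writes the sum as a finite sum of products of the slab sums from Lemma \ref{lemaj}. You merely make explicit the details the paper leaves implicit (the weakly decreasing block pattern, the factorization over runs, and the multiplicativity of $\mf^{\bullet}$ via the stuffle product), all of which are accurate.
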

\begin{proof}
There are finitely many ways the summands in the definition of \eqref{eqqq} can be divided among the intervals $[1,p^r]$, $[p^r+1,2p^r]$, $\ldots$, $[(b-1)p^r+1,bp^r]$. In this way \eqref{eqqq} can be written as finite sums of products of terms of the form \eqref{eqthing}.
\end{proof}

\begin{lemma}
\label{lem0}
Let $f(x)\in\Z[x]$ be a polynomial with positive leading coefficient, $s_1,\ldots,s_k\in\Z$. Then
\begin{gather}
\label{eqff}
S_{f(p),0}(s_1,\ldots,s_k)\in\mf^{-\deg(f)\big([s_1]^++\ldots+[s_k]^+\big)}\ma,\\
S^{(p)}_{f(p),0}(s_1,\ldots,s_k)\in\mf^{0}\ma.
\end{gather}

\end{lemma}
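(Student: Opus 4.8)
The plan is to reduce the general polynomial bound $f(p)$ to the pure-power bounds $bp^r$ already treated in Lemma \ref{lemmm}, by induction on $\deg(f)$. Write $d=\deg(f)$ and let $c>0$ be the leading coefficient, so that $g:=f-cx^d$ has degree at most $d-1$. The base case $d=0$ is immediate, since $f(p)$ is then a fixed positive integer, $S_{f(p),0}(\bs)$ is a constant rational number, and a constant lies in $\mf^0\ma$, matching the claimed bound $\mf^{-0\cdot(\cdots)}\ma$. The engine of the inductive step is the range-splitting identity: for any cut point $T$ with $0\le T\le N$, grouping the indices of a strictly decreasing tuple according to how many exceed $T$ gives
\[
S_{N,0}(s_1,\ldots,s_k)=\sum_{j=0}^k S_{N,T}(s_1,\ldots,s_j)\,S_{T,0}(s_{j+1},\ldots,s_k),
\]
and likewise for the $p$-restricted sums. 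I would apply this with $T=cp^d$, an admissible pure power for Lemma \ref{lemmm}, distinguishing two cases by the sign of $g(p)$ for large $p$.

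If $f(p)\ge cp^d$ eventually, apply the identity with $N=f(p)$, $T=cp^d$. The factors $S_{cp^d,0}(s_{j+1},\ldots,s_k)$ lie in $\mf^{-d\sum_{i>j}[s_i]^+}\ma$ by Lemma \ref{lemmm}. Each boundary factor $S_{f(p),cp^d}(s_1,\ldots,s_j)$ runs over the short range $(cp^d,f(p)]$ of width $g(p)=O(p^{d-1})$; substituting $n_i=cp^d+n_i'$ with $n_i'\in[1,g(p)]$ and expanding
\[
(cp^d+n_i')^{-s_i}=\sum_{t\ge 0}\binom{-s_i}{t}c^{t}p^{dt}(n_i')^{-s_i-t}
\]
rewrites it as a uniformly convergent $p$-adic series of terms $p^{d\sum t_i}\,S_{g(p),0}(s_1+t_1,\ldots,s_j+t_j)$. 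Convergence holds because every offset $n_i'\le g(p)<p^d$ has $v_p(n_i')\le d-1$, so $v_p(cp^d/n_i')\ge 1$; since $\deg(g)\le d-1$, the inductive hypothesis applies to the inner sums, and a valuation count shows each term lies in $\mf^{-(d-1)\sum_{i\le j}[s_i]^+}\ma$. Multiplying the two groups of factors and summing over $j$ then lands in $\mf^{-d\sum_i[s_i]^+}\ma$, the convergence being legitimate by the earlier proposition that a series of elements eventually lying in arbitrarily deep filtration converges in $\ma$.

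If instead $f(p)<cp^d$ eventually, apply the identity with $N=cp^d$, $T=f(p)$ and solve for the $j=0$ term:
\[
S_{f(p),0}(\bs)=S_{cp^d,0}(\bs)-\sum_{j=1}^k S_{cp^d,f(p)}(s_1,\ldots,s_j)\,S_{f(p),0}(s_{j+1},\ldots,s_k).
\]
Here $S_{cp^d,0}(\bs)$ is handled by Lemma \ref{lemmm}, the boundary factors $S_{cp^d,f(p)}$ reduce to sums of degree at most $d-1$ exactly as above (now substituting $n_i=cp^d-n_i''$ over the width-$O(p^{d-1})$ range $(f(p),cp^d]$), and the remaining factors $S_{f(p),0}(s_{j+1},\ldots,s_k)$ have strictly smaller depth. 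I would close this case by a secondary induction on the depth $k$, with the same valuation bookkeeping confirming membership in $\mf^{-d\sum_i[s_i]^+}\ma$. The $p$-restricted statement follows by the identical argument: the restriction $p\nmid n_i$ propagates through every step, Lemma \ref{lemmm} supplies $\mf^0\ma$ for the pure-power factors, and the boundary reductions preserve $\mf^0\ma$, so everything stays in $\mf^0\ma$.

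The step I expect to be the main obstacle is the bookkeeping of the inductive step rather than any single clever idea: verifying uniform $p$-adic convergence of the binomial expansions, which hinges on the fact that offsets in a boundary region of width $O(p^{d-1})$ have $p$-adic valuation at most $d-1$, so that $cp^d$ divided by an offset is genuinely small $p$-adically; and simultaneously tracking the filtration exponents through the forward/backward case split and the auxiliary depth induction to obtain the sharp bound $-\deg(f)\big([s_1]^++\ldots+[s_k]^+\big)$. This is precisely the cumbersome, mechanical part, which is why the statement is broken into the successive Lemmas \ref{lemaj}, \ref{lemmm}, \ref{lem0}.
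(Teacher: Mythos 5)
Your proposal matches the paper's proof in all essentials: induction on $\deg(f)$, the range-splitting identity at the pure-power cut point $cp^{d}$ (the paper's $ap^{r}$), binomial expansion of the boundary factor $S_{f(p),cp^{d}}$ into terms $p^{d\sum t_i}S_{g(p),0}(s_1+t_1,\ldots)$ handled by the inductive hypothesis, and reduction of the pure-power factors to Lemma \ref{lemmm}. Your explicit treatment of the case $f(p)<cp^{d}$ via a secondary depth induction is a correct filling-in of what the paper dismisses as ``similar,'' and your convergence and filtration bookkeeping is sound.
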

\begin{proof}
We prove this for $S_{f(p),0}(s_1,\ldots,s_k)$. The other case is similar.

We induct on the degree of $f$. If $\deg(f)=0$, the quantity $S_{f(p),0}(s_1,\ldots,s_k)$ is a rational constant independent of $p$, so is in the MHS algebra.

Assume $\deg(f)=r\geq 1$. We can write $f(x) = ax^r +g(x)$ or $f(x)=ax^r -g(x)$ where $g(x)$ is either $0$ or a polynomial of degree less than $r$ with positive leading coefficient.

We will assume that $f(x) = ax^r +g(x)$ as above (the case $f(x)=ax^r -g(x)$ is similar). We have
\[
S_{f(p),0}(s_1,\ldots,s_k)=\sum_{i=0}^kS_{ap^r+g(p),ap^r}(s_1,\ldots,s_i)S_{ap^r,0}(s_{i+1},\ldots,s_k).
\]
So it suffices to show that
\[
S_{ap^r+g(p),ap^r}(s_1,\ldots,s_i)\in\mf^{-r\big([s_1]^++\ldots+[s_i]^+\big)}\ma.
\]

For all sufficiently large $p$, we will have $ap^r>g(p)$. For such $p$,
\begin{gather*}
S_{ap^r+g(p),ap^r}(s_1,\ldots,s_i)\\
=\sum_{g(p)\geq n_1>\ldots>n_i\geq 1}\frac{1}{(ap^r+n_1)^{s_1}\ldots(ap^r+n_i)^{s_i}}\\
=\sum_{g(p)\geq n_1>\ldots>n_i\geq 1}\frac{\lp1+\frac{ap^r}{n_1}\rp^{-s_1}\ldots \lp1+\frac{ap^r}{n_i}\rp^{-s_i}}{n_1^{s_1}\ldots n_i^{s_i}}\\
=\sum_{t_1,\ldots,t_i\geq 0}\lp\prod_{j=1}^i{-s_j\choose t_j}\rp (ap^r)^{\sigma t}S_{g(p),0}(s_1+t_1,\ldots,s_i+t_i).
\end{gather*}
The result now follows by applying the inductive hypothesis to the term $S_{g(p),0}(s_1+t_1,\ldots,s_i+t_i)$.
\end{proof}

\begin{proof}[Proof of Theorem \ref{thmps}]
The proof is induction on $k$. For $k=0$ the result is trivial. For the induction step, observe that
\begin{gather*}
\hspace{-50mm}S_{f(p),g(p)}(s_1,\ldots,s_k)=S_{f(p),0}(s_1,\ldots,s_k)\\\hspace{40mm}-\sum_{i=0}^{k-1} S_{f(p),g(p)}(s_1,\ldots,s_i)S_{g(p),0}(s_{i+1},\ldots,s_k)
\end{gather*}
and use Lemma \ref{lem0}. The proof for $S^{(p)}$ is identical.
\end{proof}

\subsection{Binomial coefficients}
\label{ssbin}
\begin{theorem}
\label{thgenbin}
Let $f(x),g(x)\in\Z[x]$ have positive leading coefficients. Then the binomial coefficient
\begin{equation}
\label{defbin}
{f(p)\choose g(p)}
\end{equation}
is a unit in the MHS algebra.
\end{theorem}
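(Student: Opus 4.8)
The plan is to separate the prime-to-$p$ structure of the two factorials constituting $\binom{f(p)}{g(p)}$ and to expand the resulting products of ``nearly constant'' factors into the multiple power sums of Theorem \ref{thmps}. We may assume $\deg f\geq\deg g$ and $f(p)>g(p)\geq 0$ for large $p$, since otherwise the binomial coefficient is eventually $0$, which is not a unit. Put $h(p)=f(p)-g(p)$. By Kummer's theorem $v_p\binom{f(p)}{g(p)}$ equals the number of base-$p$ carries in the sum $g(p)+h(p)$; this is bounded by the (bounded) number of digits, and a short argument shows it is eventually a constant $V$. Hence $\binom{f(p)}{g(p)}=p^{V}u_p$ with $u_p$ a $p$-adic unit, and since $p^{V}$ is a nonzero rational function of $p$, hence a unit in $\ma$ by Proposition \ref{proprat}, it suffices to prove that $u_p$ lies in $\ma$ and is a unit there.

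To handle $u_p$ I would apply the Legendre decomposition $N!=p^{v_p(N!)}\prod_{i\geq 0}(\lfloor N/p^i\rfloor!)_p$, where $(M!)_p:=\prod_{1\leq n\leq M,\ p\nmid n}n$, to $N=f(p),g(p),h(p)$, so that $u_p=\prod_{i\geq 0}(\lfloor f/p^i\rfloor!)_p\big/\big((\lfloor g/p^i\rfloor!)_p(\lfloor h/p^i\rfloor!)_p\big)$ is a finite product; this product over $i$ already unrolls the $p$-adic recursion. For large $p$ each $\lfloor N/p^i\rfloor$ agrees with a polynomial $\widetilde N_i(p)$ with positive leading coefficient, so everything reduces to understanding $(F(p)!)_p$ for $F\in\Z[x]$ with positive leading coefficient. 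Grouping $[1,F(p)]$ into blocks of length $p$, the $m$-th complete block contributes $\prod_{r=1}^{p-1}(mp+r)=(p-1)!\prod_{r=1}^{p-1}\lp1+\tfrac{mp}{r}\rp=(p-1)!\,(1+c_m)$, where $c_m=\sum_{n\geq 1}m^{n}\hp(1^{n})\in\mf^{1}\ma$, together with one incomplete final block of length $F(p)\bmod p$. Thus $(F(p)!)_p$ is an explicit power of $(p-1)!$ times a partial-block factor times $\prod_{m}(1+c_m)$.

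The heart of the matter is then $\prod_{m=0}^{M-1}(1+c_m)$, where $M$ (the number of complete blocks) is a polynomial in $p$. Expanding it as $\sum_{\ell\geq 0}e_\ell(c_0,\ldots,c_{M-1})$ and using Newton's identities to write $e_\ell$ as an isobaric polynomial of weight $\ell$ in the power sums $\sum_m c_m^{\,j}$, one expands $c_m^{\,j}$ and pulls out the $m$-independent factors $\hp(1^{n_i})$, leaving depth-one sums $\sum_m m^{a}$ over a range bounded by a polynomial in $p$. Each such sum is a multiple power sum with a nonpositive argument, hence lies in $\mf^{0}\ma$ by Theorem \ref{thmps}; since each $\hp(1^{n_i})\in\mf^{1}\ma$, the power sum $\sum_m c_m^{\,j}$ lies in $\mf^{\geq j}\ma$, and therefore $e_\ell\in\mf^{\geq\ell}\ma$. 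Consequently the series $\sum_\ell e_\ell$ converges in $\ma$ to an element $1+(\text{something in }\mf^{1}\ma)$, a unit by Proposition \ref{propunit}. Assembling the factors, $u_p$ is a product and quotient of such units, of rational-function factors coming from the partial blocks, and of an integer power of $(p-1)!$ whose net exponent one checks (via the carry bookkeeping of the first paragraph) to vanish; hence $u_p$, and therefore $\binom{f(p)}{g(p)}$, is a unit in $\ma$.

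The step I expect to be the main obstacle is precisely this $p$-adic digit-and-carry bookkeeping: verifying that $V$ and the net exponent of $(p-1)!$ are eventually constant, that the latter is in fact $0$ so that no Wilson factor survives, and that each incomplete final block --- whose length is a base-$p$ digit and may itself be a nonconstant polynomial such as $p-c$, giving rise to further $(p-1)!$-type contributions --- reassembles into a rational-function unit times a factor of the form $1+\mf^{1}\ma$. If one instead allowed a surviving power of $(p-1)!$, the argument would additionally require showing that $(p-1)!$ is a unit in $\ma$, which does not seem to follow from the elementary tools developed so far, so getting the Wilson factors to cancel exactly is the crucial point.
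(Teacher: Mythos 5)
Your proposal takes a genuinely different route from the paper's, and it can be made to work, but the step you flag at the end is real mathematical content that still has to be supplied. The paper avoids Legendre's formula and Wilson factors altogether: it first shows (Lemma \ref{lempow}) that ${ap^r\choose bp^r}$ is a unit via the recursion
\[
{ap^r\choose bp^r}={ap^{r-1}\choose bp^{r-1}}\cdot\sum_{n\geq 0}(-ap^r)^n H^{(p)}_{bp^r}(1^n),
\]
in which the indices divisible by $p$ are peeled off into the lower-level binomial coefficient rather than into a power of $p$ times a Wilson product; the remaining $p$-restricted factor is $1+\mf^1\ma$ by Lemma \ref{lem0} and Proposition \ref{propunit}, so $(p-1)!$ never appears. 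The general case is then an induction on $\deg f$: writing $f=ax^r+j$ and $g=bx^r+j+k$, the ratio ${f(p)\choose g(p)}\big/{ap^r\choose bp^r}$ is exhibited explicitly as a product of rational functions of $p$, a lower-degree binomial coefficient, and factors $\sum_n c^n p^{nr}H_{j(p)}(1^n)$, each of which is a unit. What your approach buys is independence from that particular algebraic identity; what it costs is exactly the digit-and-carry bookkeeping you name.

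That bookkeeping does close, so your plan is salvageable; here is the missing argument. For large $p$, each base-$p$ digit $d_i(N)$ of $N\in\{f(p),g(p),h(p)\}$ is eventually either a constant $c\geq 0$ (call the digit \emph{small}) or of the form $p-c$ with $c\geq 1$ constant (call it \emph{large}); set $\epsilon_{N,i}=1$ in the large case and $0$ otherwise, which is precisely your indicator for whether the incomplete block at level $i$ donates an extra $(p-1)!$. A case check on the addition $g(p)+h(p)=f(p)$ (small+small, small+large, large+large, with and without incoming carry) shows that for large $p$ the carry out of position $i$ equals $\epsilon_{g,i}+\epsilon_{h,i}-\epsilon_{f,i}$; summing over $i$ gives $V=\sum_i(\epsilon_{g,i}+\epsilon_{h,i}-\epsilon_{f,i})$. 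Since the complete blocks contribute $(p-1)!$ with total exponent $\sum_{i\geq 0}\lfloor N/p^{i+1}\rfloor=v_p(N!)$ and $v_p(f!)-v_p(g!)-v_p(h!)=V$, the net exponent of $(p-1)!$ in $u_p$ is $V+\sum_i(\epsilon_{f,i}-\epsilon_{g,i}-\epsilon_{h,i})=0$, as you hoped. The same digit analysis shows $V$ is eventually constant, and each short correction product $\prod_{j=1}^{c-1}\lp(M+1)p-j\rp$ is a unit because $(M+1)p\in\mf^1\ma$. The remaining ingredients of your argument (the reduction of $\prod_m(1+c_m)$ to power sums $\sum_m m^a\in\mf^0\ma$ via Theorem \ref{thmps}, the estimate $e_\ell\in\mf^\ell\ma$, and Proposition \ref{propunit}) are sound, though note that an individual $c_m$ is not literally an element of $\ma$, since $m$ is a $p$-dependent index; only the symmetric functions of the $c_m$ are, which is all you actually use.
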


%

\begin{lemma}
\label{lempow}
For integers $a\geq b\geq 0$ and $r\geq 0$, the quantity
\[
{ap^r\choose bp^r}
\]
is a unit in the MHS algebra.
\end{lemma}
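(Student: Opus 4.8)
The plan is to exhibit $\binom{ap^r}{bp^r}$ as a finite product of units in $\ma$. Recall that a nonzero rational constant is a unit (it and its inverse lie in $\ma$ as constant sequences), and that by Proposition~\ref{propunit} any element of the form $1+\alpha$ with $\alpha\in\mf^1\ma$ is a unit. Since the units form a group, it suffices to write $\binom{ap^r}{bp^r}$ as a product of factors each of one of these two types. I would induct on $r$, the base case $r=0$ being the nonzero rational constant $\binom{a}{b}$ (here $a\ge b\ge 0$), which is a unit.

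For the inductive step I would use the telescoping product
\[
\binom{ap^r}{bp^r}=\prod_{j=1}^{bp^r}\frac{(a-b)p^r+j}{j},
\]
and split the index set according to whether $p\mid j$. The factors with $p\mid j$, say $j=pj'$ with $1\le j'\le bp^{r-1}$, recombine as
\[
\prod_{j'=1}^{bp^{r-1}}\frac{(a-b)p^{r-1}+j'}{j'}=\binom{ap^{r-1}}{bp^{r-1}},
\]
which is a unit by the inductive hypothesis. The remaining factors, indexed by $1\le j\le bp^r$ with $p\nmid j$, each have the shape $1+\tfrac{(a-b)p^r}{j}\in 1+\mf^r\ma$, since $p^r/j\in p^r\Z_p$ for $j$ coprime to $p$.

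It remains to show that the coprime part $P_r:=\prod_{p\nmid j,\,j\le bp^r}\bigl(1+\tfrac{(a-b)p^r}{j}\bigr)$ is a unit. Expanding the product gives
\[
P_r=\sum_{n\ge 0}(a-b)^n p^{rn}\, S^{(p)}_{bp^r,0}(1^n),
\]
where the coefficient of $(a-b)^np^{rn}$ is exactly the $p$-restricted multiple power sum over $n$ strictly decreasing indices in $[1,bp^r]$ coprime to $p$. By Lemma~\ref{lem0} (applied to $f(x)=bx^r$) each $S^{(p)}_{bp^r,0}(1^n)\in\mf^0\ma$, so for $n\ge 1$ the $n$-th term lies in $\mf^{rn}\ma\subseteq\mf^1\ma$; since $rn\to\infty$, the series converges in $\ma$ by the filtration-convergence criterion established just before Proposition~\ref{propunit}, and its tail $P_r-1$ lies in $\mf^1\ma$. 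Hence $P_r=1+(P_r-1)$ is a unit by Proposition~\ref{propunit}, completing the induction.

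The combinatorial manipulations (the telescoping product and the $p\mid j$ versus $p\nmid j$ split) are routine; the one point requiring care is the convergence step, namely verifying that the formal expansion of $P_r$ really equals the quotient $\binom{ap^r}{bp^r}/\binom{ap^{r-1}}{bp^{r-1}}$ as an identity in $\ma$ and that its non-constant part genuinely lies in $\mf^1\ma$. This is exactly where Lemma~\ref{lem0} and the filtration-convergence proposition are needed, and I expect it to be the main (though mild) obstacle.
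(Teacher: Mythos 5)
Your proof is correct and follows essentially the same route as the paper: induction on $r$, factoring out $\binom{ap^{r-1}}{bp^{r-1}}$ from the $p$-divisible indices, and expanding the remaining $p$-coprime product as $\sum_{n\ge 0}(\pm)^n p^{rn}S^{(p)}_{bp^r,0}(1^n)$, which is a unit by Lemma~\ref{lem0} and Proposition~\ref{propunit}. The only differences are cosmetic (you write the telescoping product as $\prod_j \frac{(a-b)p^r+j}{j}$ where the paper uses $\prod_i(1-\frac{ap^r}{i})$, and you spell out the convergence step in more detail).
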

\begin{proof}
The proof is induction on $r$. For $r=0$ this is trivial.

For $p\geq 3$, we have
\begin{align*}
{ap^r\choose bp^r}&={ap^{r-1}\choose bp^{r-1}}\prod_{\substack{i=1\\p\nmid i}}^{bp^r}\lp 1-\frac{ap^r}{i}\rp\\
&={ap^{r-1}\choose bp^{r-1}}\cdot\sum_{n\geq 0}(-ap^r)^n H_{bp^r}^{(p)}(1^n).
\end{align*}
Now use the inductive hypothesis and Lemma \ref{lem0}, noting that the sum on the right is a unit by Proposition \ref{propunit}
\end{proof}

\begin{proof}[Proof of Theorem \ref{thgenbin}]
If $\deg(f)<\deg(g)$, then \eqref{defbin} is $0$ for $p$ sufficiently large, so we may assume $\deg(f)\geq\deg(g)$.

We induct on $\deg(f)$. If $\deg(f)=0$, then \eqref{defbin} is constant, so is in the MHS algebra.

Suppose $\deg(f)=r\geq 1$. We can write $f(x)=ax^r+j(x)$, $g(x)=bx^r+j(x) +k(x)$, with $j(x)$, $k(x)$ polynomials of degree at most $r-1$. Consider the case when the leading coefficients of $j(x)$ and of $k(x)$ are non-negative (the other cases are similar). For $p$ sufficiently large that $j(p)$, $k(p)\geq 0$, we compute
\begin{align*}
{f(p)\choose g(p)}&={ap^r+j(p)\choose bp^r+j(p)+k(p)}\\
&={ap^r\choose bp^r}\frac{\lp\prod_{i=1}^{j(p)}(ap^r+i)\rp(a-b)p^r\lp\prod_{i=1}^{k(p)-1}((a-b)p^r-i)\rp}{\prod_{i=1}^{j(p)+k(p)}(bp^r+i)(bp^r+2)\ldots(bp^r+j(p)+k(p)}\\
&={ap^r\choose bp^r}\cdot\frac{(a-b)p^r}{bp^r+j(p)+k(p)}\cdot(-1)^{k(p)-1}\cdot{j(p)+k(p)-1\choose j(p)}^{-1}\cdot\\
&\hspace{15mm}\lp\sum_{n\geq 0} a^np^{nr} H_{j(p)}(1^n)\rp\cdot \lp\sum_{n\geq 0} (b-a)^np^{nr} H_{k(p)-1}(1^n)\rp\cdot\\
&\hspace{15mm} \lp\sum_{n\geq 0} b^np^{nr} H_{j(p)+k(p)-1}(1^n)\rp^{-1}.
\end{align*}
In the last line, each factor is a unit in the MHS algebra: the first factor by Lemma \ref{lempow}, the second factor by Proposition \ref{proprat}, the third factor is a non-zero constant for $p$ odd, the fourth factor by inductive hypothesis, and the fifth, sixth, and seventh factors by Proposition \ref{propunit}.

\end{proof}

\subsection{Alternating harmonic numbers}
\begin{theorem}
\label{thalt}
For all integers $k\geq 2$, the quantities
\[
p^k\sum_{n=1}^{(p-1)/2}\frac{1}{n^k},\gap p^k\sum_{n=1}^{p-1}\frac{(-1)^n}{n^k}
\]
are in the weighted MHS algebra.
\end{theorem}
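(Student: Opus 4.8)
The plan is to collapse both quantities onto a single object, the weighted half-range sum
\[
X_k:=p^k H_{(p-1)/2}(k)=p^k\sum_{n=1}^{(p-1)/2}\frac{1}{n^k},
\]
and to prove $X_k\in\wmhs$ for every $k\geq 2$. For the alternating sum, splitting $\sum_{n=1}^{p-1}(-1)^n n^{-k}$ according to the parity of $n$ and using $\sum_{n\ \mathrm{even}}n^{-k}=2^{-k}H_{(p-1)/2}(k)$ gives the exact algebraic identity
\[
\sum_{n=1}^{p-1}\frac{(-1)^n}{n^k}=2^{1-k}H_{(p-1)/2}(k)-H_{p-1}(k),
\qquad\text{so}\qquad
p^k\sum_{n=1}^{p-1}\frac{(-1)^n}{n^k}=2^{1-k}X_k-\hp(k).
\]
Since $\hp(k)\in\wmhs$ by definition, the second quantity lies in $\wmhs$ as soon as the first does; thus the whole theorem reduces to showing $X_k\in\wmhs$.

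The main tool for $X_k$ is the reflection $n\mapsto p-n$ on the upper half of $\{1,\dots,p-1\}$. For $m\leq (p-1)/2$ we have $p\nmid m$, so $(p-m)^{-k}=(-1)^k m^{-k}(1-p/m)^{-k}$ expands as a $p$-adically convergent series, and summing over the top half produces the exact identity
\[
H_{p-1}(k)=\big(1+(-1)^k\big)H_{(p-1)/2}(k)+(-1)^k\sum_{t\geq1}\binom{k+t-1}{t}p^t H_{(p-1)/2}(k+t).
\]
Multiplying by $p^k$ and abbreviating $X_j=p^jH_{(p-1)/2}(j)$ turns this into a relation among the $X_j$; because the denominators of $H_{(p-1)/2}(j)$ are prime to $p$, every tail term satisfies $X_{k+t}\in\fil^{k+t}\Ai$. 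When $k$ is even the coefficient $1+(-1)^k=2$, and the relation becomes
\[
X_k=\tfrac12\hp(k)-\tfrac12\sum_{t\geq1}\binom{k+t-1}{t}X_{k+t},
\]
expressing $X_k$ through $\hp(k)\in\wmhs$ and strictly higher-weight terms.

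To finish I would iterate: to expand a given $X_k$, replace every $X_j$ occurring in the higher-weight tail by its own expansion, each round strictly raising the minimal weight of the remaining tail. Since the tails lie in $\fil^{N}\Ai$ with $N\to\infty$ and the weighted MHS algebra is topologically closed (\cite{Ros13}), the limit of the accumulated explicit terms is an element of $\wmhs$, giving $X_k\in\wmhs$.

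The hard part — and the step I expect to require the most care — is the odd weights. There $1+(-1)^k=0$, so the reflection identity degenerates: it no longer isolates $H_{(p-1)/2}(k)$ and only reproduces relations among even-index terms, leaving odd-index $X_j$ genuinely undetermined by reflection alone. This is not an artifact: for odd $k\geq 3$ the half-sum is governed at leading order by a Bernoulli number, $H_{(p-1)/2}(k)\equiv c_k B_{p-k}\pmod p$ for an explicit constant $c_k$, rather than by a multiple harmonic sum. My proposed resolution is to feed in the $p$-adic zeta values: by \eqref{eqzetak} the quantity $B_{p-k}$ is tied to $\zeta_p(k)$, and by Washington's exact expansion \eqref{z} we have $p^k\zeta_p(k)\in\wmhs$. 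Interleaving the clean even-weight reflection step with an odd-weight step that matches $X_j$ against $p^j\zeta_p(j)$ up to higher weight, each step again raising the minimal tail weight, would close the iteration as above. The technical crux is upgrading the leading Bernoulli congruence for odd half-sums to an \emph{exact}, all-$p$-adic-order expansion in terms of $p$-adic zeta values and weighted multiple harmonic sums (equivalently, an exact Washington-type expansion for the alternating sum $\sum_{n=1}^{p-1}(-1)^n n^{-k}$); this is where the real work lies.
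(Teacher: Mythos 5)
Your reduction of the alternating sum to the half-range sum via the identity $\sum_{n=1}^{p-1}(-1)^n n^{-k}=2^{1-k}H_{(p-1)/2}(k)-H_{p-1}(k)$ is exactly the reduction the paper uses, and is fine. The gap is in the main step, proving $X_k:=p^kH_{(p-1)/2}(k)\in\wmhs$. Your reflection identity isolates $H_{(p-1)/2}(k)$ only for even $k$, and even then its tail $\sum_{t\geq1}\binom{k+t-1}{t}X_{k+t}$ contains the odd-index terms $X_{k+1},X_{k+3},\dots$, so the even case cannot be closed without the odd one. For odd $j$ the reflection degenerates to $\hp(j)=-\sum_{t\geq1}\binom{j+t-1}{t}X_{j+t}$, whose lowest-index odd unknown is $X_{j+2}$; solving for $X_{j+2}$ reintroduces the \emph{lower}-index even term $X_{j+1}$, whose even-weight expansion in turn reintroduces $X_{j+2}$. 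The system is not triangular, and you have not shown that the resulting coupled iteration converges. You recognize this and propose to substitute an exact expansion of the odd half-sums in terms of $p$-adic zeta values, but you explicitly leave that expansion unproven --- and it is precisely the substantive content of the theorem. As written, the odd-weight step is a genuine gap, not a technicality.

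The paper's proof supplies exactly the missing identity, for all $k$ at once and with no reflection or parity split. One writes $H_{(p-1)/2}(k)=\lim_{m\to\infty}\sum_{n=1}^{(p-1)/2}n^{\varphi(p^m)-k}$, applies Faulhaber's formula \eqref{eqb1}, and expands the resulting Bernoulli polynomial at $\tfrac{p+1}{2}=\tfrac12+\tfrac{p}{2}$ using the addition formula \eqref{eqb3} together with $B_\ell(1/2)=(2^{1-\ell}-1)B_\ell$. Passing to the limit yields an exact, $p$-adically convergent expansion of $p^kH_{(p-1)/2}(k)$ as a $\Q$-linear combination, with $p$-independent coefficients, of the quantities $p^{k+j}\zeta_p(k+j)$ for $j\geq0$. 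Each of these lies in the weighted MHS algebra by Washington's series \eqref{z}, and topological closedness of the weighted MHS algebra finishes the proof. To repair your argument you would need either to carry out this Bernoulli-polynomial computation (which makes the reflection step superfluous) or to prove convergence of your interleaved even/odd iteration; neither is done in the proposal.
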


\begin{proof}
We will use three properties of the Bernoulli polynomials $B_k(x)$:
\begin{gather}
\label{eqb1}\sum_{j=0}^{n-1}j^k=\frac{B_{k+1}(n)-B_{k+1}}{k+1},\\
\label{eqb2}B_{k}\lp\frac{1}{2}\rp=(2^{1-k}-1)B_k,\\
\label{eqb3}B_k(x+y)=\sum_{j=0}^k {k\choose j}B_{k-j}(x)y^j.
\end{gather}
We compute
\begin{align*}
p^kH_{\frac{p-1}{2}}(k)&=p^k\lim_{m\to\infty}\sum_{n=1}^{(p-1)/2}n^{\varphi(p^m)-k}\\
&=\lim_{m\to\infty}\frac{B_{\varphi(p^m)-k+1}\lp\frac{p+1}{2}\rp-B_{\varphi(p^m)-k+1}}{\varphi(p^m)-k+1}\\
&=p^k\lim_{m\to\infty}\frac{\left[\sum_{j=0}^{\varphi(p^m)-k+1}{\varphi(p^m)-k+1\choose j}B_{\varphi(p^m)-k+1-j}\lp\frac{1}{2}\rp \lp\frac{p}{2}\rp^j \right]-B_{\varphi(p^m)-k+1}}{\varphi(p^m)-k+1}\\
&=\lim_{m\to\infty} \left[\sum_{j=0}^{\varphi(p^m)-k+1}{\varphi(p^m)-k+1\choose j}\frac{2^{k-\varphi(p^m)+j}-1}{2^j}\frac{B_{\varphi(p^m)-k+1-j}}{\varphi(p^m)-k+1}p^{k+j}\right]\\
&\hspace{30mm}-\frac{B_{\varphi(p^m)-k+1}}{\varphi(p^m)-k+1}\\
&=\zeta_p(k)+\sum_{j\geq 0}{-k\choose j}\frac{1-2^{k+j}}{2^j}\zeta_p(k+j),
\end{align*}
which proves the first assertion. The second assertion now follows from
\[
p^k\sum_{n=1}^{p-1}\frac{(-1)^n}{n^k}=\frac{1}{2^{k-1}}p^k H_{\frac{p-1}{2}}(k)-\hp(k).
\]
\end{proof}

As an easy application, we obtain the following:
\begin{congruence}
\label{congalt}
For all primes $p\geq 7$,
\begin{equation}
\label{eqalt}
\sum_{n=1}^{p-1}\frac{(-1)^n}{n^2}\equiv \frac{3}{4}\sum_{n=1}^{p-1}\frac{1}{n^2}\mod p^3.
\end{equation}
\end{congruence}
\begin{proof}
By Theorem \ref{thalt}, \eqref{eqalt} is equivalent to a weighted congruence, which can be checked algorithmically.
\end{proof}
There is a notable similarity between \eqref{eqalt} and the well-knon identity
\begin{equation}
\label{eqeuler}
\sum_{n=1}^{\infty}\frac{(-1)^n}{n^2}=-\frac{3}{4}\sum_{n=1}^\infty\frac{1}{n^2},
\end{equation}
although the right hand sides differ in sign.

\subsection{The curious congruence}
\label{teccur}
Recall that we define
\[
C_{r,k,p}:=\sum_{\substack{n_1+\ldots+n_k=p^r\\p\nmid n_1\ldots n_k}}\frac{1}{n_1\ldots n_k}.
\]
\begin{proposition}
For fixed $r$, $k\geq 1$, the quantity $C_{r,k,p}$ is in the MHS algebra.
\end{proposition}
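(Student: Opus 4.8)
The plan is to exhibit $C_{r,k,p}$ as a finite combination, built from products and convergent sums, of quantities already known to lie in $\ma$: the multiple power sums $S_{f(p),g(p)}(\bs)$ and $S^{(p)}_{f(p),g(p)}(\bs)$ of Theorem~\ref{thmps}, and the rational functions of $p$ of Proposition~\ref{proprat}. Since $\ma$ is a commutative $\Q$-algebra, $1/p\in\ma$ by Proposition~\ref{proprat}, and $\ma$ is closed under the convergent infinite sums described earlier in this section, it suffices to reduce the constrained, $p$-restricted sum defining $C_{r,k,p}$ to such building blocks.

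The backbone of the reduction is the constraint-free evaluation
\[
\sum_{\substack{n_1+\ldots+n_k=N\\ n_i\geq 1}}\frac{1}{n_1\cdots n_k}=\frac{k!}{N}\,H_{N-1}(1^{k-1}),\qquad N\geq k .
\]
I would prove this by induction on $k$, repeatedly applying the partial-fraction identity $\tfrac{1}{xy}=\tfrac{1}{x+y}\bigl(\tfrac1x+\tfrac1y\bigr)$ on the hyperplane where $x+y$ equals $N$ minus the remaining variables and reindexing the resulting double sums; alternatively one extracts the coefficient of $x^N$ in $\bigl(\sum_{n\ge 1}x^n/n\bigr)^k=(-\log(1-x))^k$ and uses $H_{N-1}(1^{k-1})=e_{k-1}\bigl(1,\tfrac12,\ldots,\tfrac1{N-1}\bigr)$ together with the Stirling-number formula for these coefficients. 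Specializing $N=p^r$ gives, when $r=1$ (where $p\nmid n_i$ is automatic), the known identity $C_{1,k,p}=\tfrac{k!}{p}\H(1^{k-1})$; and since $H_{p^r-1}(\bs)=S_{p^r-1,0}(\bs)\in\ma$ by Theorem~\ref{thmps}, the case $r=1$ is immediate.

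For $r\ge 2$ the coprimality conditions are genuine, and I would strip them off by inclusion--exclusion, writing $\prod_i(1-[p\mid n_i])$ and expanding. A term indexed by a subset $A$ of forced-divisible indices becomes, after the substitution $n_i=pm_i$ for $i\in A$, a factor $p^{-|A|}$ times a sum in which the scaled block and the free block (whose total is forced divisible by $p$) are coupled only through their common partial total $\mu=\sum_{i\in A}m_i$; applying the backbone identity to each block turns this into a single sum over $\mu$ of a product of two ordinary multiple harmonic sums. The remaining, and main, difficulty is that these single sums carry $\mu$-dependent summation bounds, such as $H_{p(p^{r-1}-\mu)-1}(\cdots)$; I expect the crux of the proof to be showing that, after suitable reindexings (e.g.\ $c\mapsto N-c$, as already occurs in the $k=3$ model computation) and a further inclusion--exclusion over the induced divisibility conditions on partial sums, every such sum collapses into finitely many multiple power sums $S_{f(p),g(p)}(\bs)$ and $S^{(p)}_{f(p),g(p)}(\bs)$ with fixed polynomial bounds $f,g$. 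Once that bookkeeping is carried out, Theorem~\ref{thmps} and the ring structure of $\ma$ finish the proof. A more direct route---solving the constraint for $n_k$ and $p$-adically expanding $1/(p^r-\sum_{i<k}n_i)$, which converges because $p\nmid\sum_{i<k}n_i$---fails to simplify, since it reintroduces the power $(\sum_{i<k}n_i)^{m+1}$ in a denominator; this is why I favor the symmetric/inclusion--exclusion route.
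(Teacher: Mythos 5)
Your reduction of the $r=1$ case and the first inclusion--exclusion step are fine, but the argument has a genuine gap exactly where you flag ``the remaining, and main, difficulty'': you never actually show that the coupled sums
\[
\sum_{\mu=1}^{p^{r-1}-1}\frac{1}{\mu\,(p^{r-1}-\mu)}\,H_{\mu-1}(1^{a})\,H_{p(p^{r-1}-\mu)-1}(1^{b})
\]
(and their analogues for the other subsets $A$) lie in the MHS algebra. These are not multiple power sums $S_{f(p),g(p)}(\bs)$ in the sense of Theorem~\ref{thmps}: the upper bound of the inner harmonic sum depends on the outer summation variable $\mu$, and unwinding the definitions produces a multiple sum over a region cut out by an inequality of the form $n_1+p\mu<p^{r}$ that mixes scaled and unscaled variables, which is precisely the kind of region Theorem~\ref{thmps} does not directly cover. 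Asserting that ``after suitable reindexings and a further inclusion--exclusion\ldots every such sum collapses into finitely many multiple power sums'' states the missing lemma rather than proving it; as written the proof is incomplete at its central step.

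The paper sidesteps this by symmetrizing \emph{before} removing the divisibility constraints. Applying the identity $\sum_{\sigma\in S_k}\bigl[x_{\sigma(1)}(x_{\sigma(1)}+x_{\sigma(2)})\cdots\bigr]^{-1}=(x_1\cdots x_k)^{-1}$ directly to the constrained sum and setting $m_i=n_i+\cdots+n_k$ turns $C_{r,k,p}$ into $k!$ times a single \emph{ordered} sum over $p^r=m_1>\cdots>m_k\geq 1$ with conditions $p\nmid m_k$ and $p\nmid(m_i-m_{i+1})$; the base-$p$ substitution $m_i=a_ip-j_i$ and the decomposition into sub-sums of the form \eqref{subsum} from the proof of Lemma~\ref{lemaj} then apply verbatim. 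If you pursue your route, filling the gap would essentially force you to reprove that same decomposition for the coupled $\mu$-sums, so the detour through the unconstrained backbone identity buys nothing; I recommend keeping the $p\nmid n_i$ conditions through the symmetrization step instead.
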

\begin{proof}
We use the well-known identity
\[
\sum_{\sigma \in S_n}\frac{1}{x_{\sigma(1)}(x_{\sigma(1)}+x_{\sigma(2)})\ldots(x_{\sigma(1)}+x_{\sigma(2)}+\ldots+x_{\sigma(n)})}=\frac{1}{x_1\ldots x_n}.
\]
We apply this identity to $C_{r,k,p}$ and write $m_i=n_i+n_{i+1}+\ldots+n_k$ to get
\[
C_{r,k,p}=k!\sum_{\substack{p^r=m_1>m_2>\ldots>m_k\geq 1\\p\nmid m_k(m_{k-1}-m_k)\ldots(m_1-m_2)}}\frac{1}{m_1\ldots m_k}.
\]
From here the proof essentially proceeds as in the proof of Lemma \ref{lemaj}. We write $m_i=a_ip-j_i$ to obtain
\begin{equation}
\label{curbreak}
C_{r,k,p}=k!\sum_{a_i,j_i}\frac{1}{(a_1p-j_1)\ldots(a_kp-j_k)},
\end{equation}
where the sum is over $a_i$, $j_i$ satisfying
\begin{itemize}
\item $p^{r-1} = a_1\geq a_2\geq \ldots\geq a_k\geq 1$ and $p-1\geq j_1,\ldots,j_k\geq 0$,
\item if $a_i=a_{i+1}$ then $j_i<j_{i+1}$, and
\item $j_1=0$, $j_k\neq 0$, and $j_i\neq j_{i+1}$.
\end{itemize}
We separate \eqref{curbreak} into finitely many sub-sums of the form \eqref{subsum} depending on the possible orderings of the $a_i$ and $j_i$. In the proof of Lemma \ref{lemaj}, we showed that these subsums are in the MHS algebra.
\end{proof}

%
%
%
%
%
%

%
%
\section{Multiple polylogarithms and truncations}
\label{secmpl}
Many of the results of this paper can be applied to a more general class of finite sums.
\begin{definition}
Suppose $s_1,\ldots,s_k$ are positive integers and $z_1,\ldots,z_k\in\Q$. We define
\begin{equation}
\label{tmpl}
H_N(s_1,\ldots,s_k; z_1,\ldots,z_k):=\sum_{N\geq n_1>\ldots>n_k\geq 1}\frac{z_1^{n_1}\ldots z_k^{n_k}}{n_1^{s_1}\ldots n_k^{s_k}}\in\Q.
\end{equation}
\end{definition}
Much of the present work can be generalized to include sums \eqref{tmpl} (though known methods may no longer be sufficient to determine all supercongruences).  We will not work out the general theory here, but content ourselves with expanding a few quantities as convergent series involving sums \eqref{tmpl}.

\subsection{Alternating multiple harmonic sums}
There is particular interest in the situation $z_1,\ldots,z_k\in\{\pm 1\}$. In this case, the sum \eqref{tmpl} is called an \emph{alternating multiple harmonic sum}. There is a special notation for such sums: the $z_i$ are omitted, and we write a bar over $s_i$ if $z_i=-1$. So, for example,
\[
H_N(1,\overline{2},\overline{3})=H_N(1,2,3;1,-1,-1).
\]

\begin{theorem}
For every prime $p\geq 3$, we have
\begin{equation}
\label{2top}
2^p = 2+\sum_{k\geq 0}(-1)^{k+1} p^{k+1}\H(\overline{1},1^k).
\end{equation}
Here we write
\[
\H(\overline{1},1^k)=\H(\overline{1},\underbrace{1,\ldots,1}_k).
\]
\end{theorem}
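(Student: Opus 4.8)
The plan is to unfold the definition of the alternating multiple harmonic sum and isolate the outermost summation index, which is the one carrying the sign. Writing $\H=H_{p-1}$ and using the convention of \eqref{tmpl}, I would first record that
\[
\H(\overline{1},1^k)=\sum_{p-1\geq n>m_1>\ldots>m_k\geq 1}\frac{(-1)^n}{n\,m_1\cdots m_k}=\sum_{n=1}^{p-1}\frac{(-1)^n}{n}\,H_{n-1}(1^k),
\]
since once the top index $n$ is fixed the remaining indices $m_1>\ldots>m_k$ range freely over $\{1,\ldots,n-1\}$ and assemble into $H_{n-1}(1^k)$. Interchanging the sums over $n$ and $k$ and pulling out the power of $p$ via $(-1)^{k+1}p^{k+1}=-p\,(-p)^k$, the right-hand side of \eqref{2top} becomes
\[
\sum_{k\geq 0}(-1)^{k+1}p^{k+1}\H(\overline{1},1^k)=-p\sum_{n=1}^{p-1}\frac{(-1)^n}{n}\sum_{k\geq 0}(-p)^k H_{n-1}(1^k).
\]

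The key step is to recognize the inner $k$-sum as a finite product. As $H_{n-1}(1^k)$ is the $k$-th elementary symmetric function of $1/1,\ldots,1/(n-1)$, the generating identity $\sum_{k\geq 0}x^k H_{N}(1^k)=\prod_{m=1}^{N}\lp 1+x/m\rp$ with $x=-p$, $N=n-1$ gives $\sum_{k\geq 0}(-p)^k H_{n-1}(1^k)=\prod_{m=1}^{n-1}\lp 1-p/m\rp$. I would then reduce the resulting coefficient by factoring a $-1$ out of each $m-p$:
\[
\frac{p}{n}\prod_{m=1}^{n-1}\lp 1-\frac{p}{m}\rp=\frac{(-1)^{n-1}}{n!}\,p(p-1)\cdots(p-n+1)=(-1)^{n-1}{p\choose n}.
\]
Substituting back, the signs combine as $(-1)^n(-1)^{n-1}=-1$, so the whole expression collapses to $\sum_{n=1}^{p-1}{p\choose n}=2^p-2$, which is exactly the rearrangement of \eqref{2top}.

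Finally, I would observe that no convergence analysis is required: since $\H(\overline{1},1^k)=H_{p-1}(\overline{1},1^k)$ demands $k+1$ strictly decreasing indices drawn from $\{1,\ldots,p-1\}$, it vanishes once $k\geq p-1$, so the series over $k$ is genuinely a finite sum (terminating at $k=p-2$) and \eqref{2top} is an exact identity in $\Q$ for each fixed $p$; this also makes the interchange of summation above automatic. The only real care is the sign-and-factorial bookkeeping in the passage to ${p\choose n}$ and in verifying that the two signs cancel; the rest is formal manipulation. I expect that bookkeeping to be the sole place where an error could creep in, so I would double-check it by evaluating both sides at a small prime such as $p=3$ or $p=5$.
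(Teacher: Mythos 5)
Your proof is correct and is essentially the paper's argument run in reverse: the paper starts from $2^p=(1+1)^p$, writes ${p\choose j}=(-1)^{j-1}\frac{p}{j}\prod_{i=1}^{j-1}\lp1-\frac{p}{i}\rp$ and expands the product via $\sum_k(-p)^kH_{j-1}(1^k)$, whereas you start from the series and collapse it back to $\sum_{n=1}^{p-1}{p\choose n}=2^p-2$ using the same two identities. The finiteness observation (vanishing of $\H(\overline{1},1^k)$ for $k\geq p-1$) is a nice explicit touch, but the substance matches the paper's proof of the generalized statement specialized to $n=2$.
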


The reduction of \eqref{2top} modulo $p$ is Euler's congruence $2^p\equiv 2\mod p$. The reduction of \eqref{2top} modulo $p^2$ is the result
\begin{equation}
\label{eq2}
2^p\equiv 2-p\sum_{n=1}^{p-1}\frac{(-1)^n}{n}\mod p^2,
\end{equation}
which is also well-known. The identity \eqref{2top} shows that \eqref{eq2} can be extended to congruences holding modulo arbitrarily large powers of $p$, by introducing additional alternating multiple harmonic sums.

The expression \eqref{2top} can be generalized.
\begin{theorem}
For every positive integer $k$ and every prime $p$, we have
\[
n^p=n+\sum_{k\geq 0}(-1)^n (-1)^{k+1}\sum_{a=1}^{n-1}p^{k+1}\H(1^{k+1};-a,1^k).
\]
\end{theorem}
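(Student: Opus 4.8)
The plan is to evaluate the double sum on the right directly, by peeling off the outermost summation index, interchanging the order of summation, and recognizing the resulting inner sum as a product that collapses to a binomial coefficient. First I would record that for a fixed prime $p$ the series in $k$ is really a finite sum: the term $\H(1^{k+1};-a,1^k)=H_{p-1}(1^{k+1};-a,1^k)$ has depth $k+1$, hence vanishes once $k+1>p-1$. Thus the asserted formula is an exact identity between rational numbers for each $p$, and every rearrangement below is legitimate.

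Next I would separate the outermost variable $n_1=m$ in the defining sum \eqref{tmpl}, giving
\[
\H(1^{k+1};-a,1^k)=\sum_{m=1}^{p-1}\frac{(-a)^m}{m}\,H_{m-1}(1^k).
\]
Substituting this into the right-hand side and interchanging the finite sums over $k$ and $m$, the inner sum over $k$ becomes $-p\sum_{k\ge 0}(-p)^k H_{m-1}(1^k)$. Here I would invoke the elementary symmetric function identity already used to derive \eqref{binex}, namely $\sum_{k\ge 0}(-p)^k H_{m-1}(1^k)=\prod_{j=1}^{m-1}\bigl(1-\tfrac{p}{j}\bigr)$, so that the contribution of a fixed $a$ becomes
\[
\sum_{m=1}^{p-1}(-a)^m\Bigl(-\frac{p}{m}\Bigr)\prod_{j=1}^{m-1}\Bigl(1-\frac{p}{j}\Bigr).
\]

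The key simplification is the identity $\frac{p}{m}\prod_{j=1}^{m-1}\bigl(1-\tfrac{p}{j}\bigr)=(-1)^{m-1}\binom{p}{m}$, obtained by clearing denominators and matching with $\frac{p!}{m!(p-m)!}$; after the signs cancel, the contribution of each $a$ reduces to $\sum_{m=1}^{p-1}\binom{p}{m}a^m=(1+a)^p-1-a^p$ by the binomial theorem. Summing over $a=1,\dots,n-1$, the constant terms contribute $-(n-1)$ and the sum $\sum_{a=1}^{n-1}\bigl((1+a)^p-a^p\bigr)$ telescopes to $n^p-1$, so the whole right-hand side equals $n+(n^p-n)=n^p$, as desired. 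I anticipate no serious obstacle; the only delicate points are the sign bookkeeping in the binomial identity and the justification of interchanging summations, both routine given the finiteness remark. As a consistency check the argument reproduces \eqref{2top} in the case $n=2$, and one may equally phrase it as an induction on $n$ with \eqref{2top} serving as the base case, the inductive step being precisely the relation $(1+(n-1))^p=n^p$.
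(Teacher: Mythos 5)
Your argument is correct and is essentially the paper's proof run in the opposite direction: the same key identity ${p\choose m}=(-1)^{m-1}\frac{p}{m}\prod_{j=1}^{m-1}\bigl(1-\tfrac{p}{j}\bigr)$ together with the expansion $\prod_{j=1}^{m-1}\bigl(1-\tfrac{p}{j}\bigr)=\sum_{k\ge 0}(-p)^kH_{m-1}(1^k)$ does all the work in both, and your telescoping of $\sum_{a=1}^{n-1}\bigl((1+a)^p-a^p\bigr)$ is exactly the unrolled form of the paper's induction on $n$ (an equivalence you note yourself at the end). One point you should make explicit rather than pass over silently: your computation yields the identity \emph{without} the factor $(-1)^n$ appearing in the statement; this agrees with what the paper's own inductive proof produces and with the $n=2$ case \eqref{2top}, so that factor is evidently a typo in the statement, and you are proving the corrected version.
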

\begin{proof}
The proof is induction on $n$. For $n=1$ this is trivial as the sum on the right is empty. The inductive step follows from the expansion
\begin{align*}
n^p&=(1+(n-1))^p\\
&=1 + (n-1)^p+\sum_{j=1}^{p-1}{p\choose j}(n-1)^j\\
&=1+(n-1)^p+\sum_{j=1}^{p-1}(-1)^{j-1}\frac{p}{j}\lp1-\frac{p}{1}\rp\lp1-\frac{p}{2}\rp\ldots\lp1-\frac{p}{j-1}\rp(n-1)^j\\
&=1+(n-1)^p+\sum_{j=1}^{p-1}(-1)^{j-1}\frac{p(n-1)^j}{j}\sum_{k\geq 0}(-1)^kp^k H_{j-1}(1^k)\\
&=1+(n-1)^p+\sum_{k\geq 0}(-1)^{k+1}p^{k+1}\H(1^{k+1};1-n,1^k).
\end{align*}
\end{proof}

\appendix

\section{Software}
\label{appsoft}
We provide software for performing computations described in this paper (finding expansions for quantities in terms of multiple harmonic sums, computing weighted and mixed congruences, finding and proving \ppc{}s, etc.). The software is available at
\begin{center}
\url{https://sites.google.com/site/julianrosen/mhs}.
\end{center}
The software is written in Python 2.7, and it uses the SymPy library, as well as computations from the MZV data mine \cite{Blu10}.

Here we give a few example computations. Further documentation is included with the software itself.

\subsection{Valuation}
The method \verb|.v()| computes a lower bound on $p$-adic valuation that the software can prove. For instance, we can verify Congruence \ref{cb}:

\begin{verbatim}
>>> (12 - 9*binp(2,1) + 2*binp(3,1) - 24*hp(3)).v()
6
\end{verbatim}

\subsection{Expressing a quantity in terms of multiple harmonic sums}
The method \verb|.mhs()| expresses a quantity in terms of multiple harmonic sums. For example, we can obtain the third relation of Congruence \ref{cc}:
\begin{verbatim}
>>> a=curp(3,3); a.disp()
\end{verbatim}
$\displaystyle \sum_{\substack{n_1 + n_2 + n_3 = p^3\\p\nmid n_{1}n_{2}n_{3}}}\frac{1}{n_1n_2n_3}$

\begin{verbatim}
>>> a.mhs()
\end{verbatim}
\begin{flalign*}
&-2p^2\H(2, 1) + 2p^4\H(4, 1) + O(p^5).&
\end{flalign*}
\begin{verbatim}
>>> b=aperybp(); b.disp()
\end{verbatim}
$\displaystyle \sum_{n=0}^{p-1}{p-1\choose n}^2{p+n-1\choose n}^2$

\begin{verbatim}
>>> b.mhs()
\end{verbatim}
\begin{gather*}
1 + \frac{2}{3}p^{3}H_{p-1}(2, 1) -  \frac{59}{15}p^{5}H_{p-1}(4, 1) -  \frac{22}{45}p^{6}H_{p-1}(4, 1, 1) -  \frac{11953}{2520}p^{7}H_{p-1}(6, 1)\\
 + p^{8}\left(\frac{110321}{2700}H_{p-1}(6, 1, 1) + \frac{480701}{37800}H_{p-1}(5, 2, 1)\right) + O(p^{9}).
\end{gather*}

\section{Decision procedure for weighted congruences}
\label{appdec}
\subsection{Double shuffle relations}
In a recent work \cite{Jar16}, Jarossay establishes several families of $p$-adic series identities involving weighted multiple harmonic sums. We describe one of these families. Some setup is required to describe the result precisely.

Identify compositions with words on non-commuting symbols $x$, $y$ that do not end in $x$, via
\[
(s_1,\ldots,s_k)\leftrightarrow\underbrace{x\cdots x}_{s_1-1}y\;\underbrace{x\cdots x}_{s_2-1}y\cdots \underbrace{x\cdots x}_{s_k-1}y.
\]
Thus there are $|\bs|$ symbols in the word corresponding to $\bs$, and $\ell(\bs)$ of those symbols are $y$. Given two such words, we can look at words obtained by ``shuffling'' their letters. For example, there are three ways to shuffle $y$ and $xy$, yielding $yxy$, $xyy$, and $xyy$. Define $\hp(\bs_1\s\bs_2)$ to be the sum of terms $\hp(\bs_3)$ as $\bs_3$ runs through the shuffles of $\bs_1$ and $\bs_2$. For example, $(1)\leftrightarrow y$, $(2)\leftrightarrow xy$, $(1,2)\leftrightarrow yxy$, and $(2,1)\leftrightarrow xyy$, so
\begin{align}
\label{defs}
\hp\big((1)\s (2)\big)&:=\hp(1,2)+2 \hp(2,1).
\end{align}
\begin{theorem}[\cite{Jar16}, Fact 3.6]
\label{thds}
Let $\bs=(s_1,\ldots,s_k)$ and $\bt=(t_1,\ldots,t_m)$ be compositions. Then there is a convergent $p$-adic series identity
\begin{gather}
\hspace{-80mm}\hp(\bs\s\bt)=\label{gat1}\\(-1)^{|\bt|}\hspace{-4mm}\sum_{a_1,\ldots,a_m\geq 0}\prod_{i=1}^m{a_i+t_i-1\choose t_i-1}\cdot\hp\big(t_m+a_m,\ldots,t_1+a_1,s_1,\ldots,s_k\big).
\end{gather}

\end{theorem}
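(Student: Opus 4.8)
Throughout write $\H=H_{p-1}$, so that $\hp(\bu)=p^{|\bu|}\H(\bu)$ and $\H(\bu)$ is $p$-integral (every summation index is $<p$). Shuffling preserves the total number of letters, so each $\bu$ occurring in $\bs\s\bt$ has weight $|\bu|=|\bs|+|\bt|$, and hence $\hp(\bs\s\bt)=p^{|\bs|+|\bt|}\,H_{p-1}(\bs\s\bt)$, where I abbreviate $H_N(\bs\s\bt):=\sum_{\bu}H_N(\bu)$ (sum over shuffles). The plan is to turn the infinite $p$-adic series on the right of \eqref{gat1} into a \emph{single finite} multiple harmonic sum by a reflection substitution, and then to recognize that finite sum as $H_{p-1}(\bs\s\bt)$.

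\textbf{Convergence and resummation.} First I would note that $\binom{a_i+t_i-1}{t_i-1}\in\Z$ and $\hp(t_m+a_m,\ldots,t_1+a_1,s_1,\ldots,s_k)\in p^{|\bs|+|\bt|+a_1+\cdots+a_m}\Z_p$, so the general term has valuation tending to $\infty$; the series converges $p$-adically and may be freely rearranged. Writing each $\hp$ as $p^{(\cdots)}\H$ and interchanging the finite chain-sum over $p-1\ge N_1>\cdots>N_m>n_1>\cdots>n_k\ge1$ with the sum over the $a_i$, each inner series collapses through $\sum_{a\ge0}\binom{a+t-1}{t-1}x^a=(1-x)^{-t}$. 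Since the index $N_j$ carries exponent $t_{m+1-j}+a_{m+1-j}$, summing the corresponding $a_{m+1-j}$ produces the factor $(N_j-p)^{-t_{m+1-j}}$, and absorbing the signs into $(-1)^{|\bt|}$ replaces these by $(p-N_j)^{-t_{m+1-j}}$.

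\textbf{Reflection.} The decisive move is the reflection $\mu_i:=p-N_{m+1-i}$. This reverses the $\bt$-block so that $\mu_i$ now carries exponent $t_i$, it splits the single descending chain into two \emph{independent} descending chains $\mu_1>\cdots>\mu_m\ge1$ and $n_1>\cdots>n_k\ge1$, and it converts the one binding inequality $N_m>n_1$ into the single coupling $\mu_1+n_1\le p-1$ (all remaining order relations becoming redundant). After this substitution the right-hand side of \eqref{gat1} collapses to the finite rational expression
\[
p^{|\bs|+|\bt|}\sum_{\substack{\mu_1>\cdots>\mu_m\ge1\\ n_1>\cdots>n_k\ge1\\ \mu_1+n_1\le p-1}}\frac{1}{\mu_1^{t_1}\cdots\mu_m^{t_m}\,n_1^{s_1}\cdots n_k^{s_k}}.
\]
It then remains to prove the combinatorial identity that, for every $N$, this ``two-chain'' sum equals $H_N(\bs\s\bt)$.

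\textbf{The shuffle lemma via generating functions, and the main obstacle.} I would settle the combinatorial identity by generating functions. With $\li_\bs(z)=\sum_{n_1>\cdots>n_k\ge1}z^{n_1}/(n_1^{s_1}\cdots n_k^{s_k})$, one has $\sum_{N}H_N(\bu)z^N=\tfrac{1}{1-z}\li_\bu(z)$ for each composition, so $\sum_N H_N(\bs\s\bt)z^N=\tfrac{1}{1-z}\sum_{\bu}\li_\bu(z)=\tfrac{1}{1-z}\li_\bs(z)\li_\bt(z)$ by the classical shuffle product of one-variable multiple polylogarithms (product of iterated integrals from $0$ to $z$). On the other hand, factoring the constraint $\mu_1+n_1\le N$ shows the two-chain sum has generating function $\tfrac{1}{1-z}\li_\bt(z)\li_\bs(z)$ as well. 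Equal generating functions force equality of all coefficients, in particular at $N=p-1$, which yields $\hp(\bs\s\bt)$. The main obstacle is the reflection bookkeeping of the third paragraph: one must track exactly which reversed, shifted exponent each $N_j$ carries, verify that $\mu_i=p-N_{m+1-i}$ leaves $\mu_1+n_1\le p-1$ as the only surviving coupling, and confirm that the signs assemble precisely into $(-1)^{|\bt|}$. Once the problem is reduced to the two-chain sum, the shuffle lemma is painless, resting only on the classical polylogarithm shuffle.
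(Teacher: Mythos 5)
Your proof is correct, and it follows essentially the same route as the paper, which only sketches the argument (deferring to Jarossay's Fact 3.6) but lists precisely your three ingredients: the generating function $\tfrac{1}{1-z}\li_{\bs}(z)$ of $H_N(\bs)$ and its iterated-integral shuffle, the reflection $n\leftrightarrow p-n$, and the $p$-adic binomial expansion of $(p-n)^{-s}$ (which is your resummation step read in reverse). The bookkeeping you flag as the main obstacle checks out: $N_j$ carries exponent $t_{m+1-j}+a_{m+1-j}$, the inner sums collapse to $(p-N_j)^{-t_{m+1-j}}$ with the signs assembling into $(-1)^{|\bt|}$, and the substitution $\mu_i=p-N_{m+1-i}$ leaves $\mu_1+n_1\le p-1$ as the only coupling between the two chains.
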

The proof involves an expression for the generating function of the sequence $H_n(\bs)$ as an iterated integral (the word corresponding to $\bs$ encodes the iterated integral representation). The proof also uses a substitution $n_i\leftrightarrow p-n_i$, and a $p$-adically convergent power series expansion
\[
\frac{1}{(p-n_i)^{s_i}}=(-1)^{s_i} \sum_{t\geq 0}{s_i-1+t\choose s_i-1}\frac{p^t}{n_i^{s_i+t}}.
\]

If we fix a non-negative integer $n$, we can exclude terms of weight at least $n$ from \eqref{gat1} to obtain a weighted congruence modulo $p^n$.
\begin{conjecture}
\label{conjds}
Every weighted congruence can be deduced from Theorem \ref{thds}.
\end{conjecture}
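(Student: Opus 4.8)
The plan is to recast the conjecture as a completeness statement for the finite double shuffle relations and to attack it by comparison with multiple zeta values. First I would set up notation: let $V$ be the $\Q$-vector space freely spanned by all compositions, and let $\rho\colon V\to\Ai$ be the $\Q$-linear map sending a composition $\bs$ to $\hp(\bs)$. For each $n$, a weighted congruence modulo $p^n$ is precisely an element $\xi\in V$ with $\rho(\xi)\in\fil^n\Ai$; write $\mathcal R_n$ for the subspace of all such $\xi$, so that the $\mathcal R_n$ form a decreasing filtration and the conjecture asserts that each $\mathcal R_n$ is spanned by the weighted congruences obtained by truncating the identities of Theorem \ref{thds} modulo $p^n$. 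Two families of exact identities feed in: the stuffle products $\hp(\bs)\,\hp(\bt)=\sum\hp(\cdot)$, which hold in $\Ai$ for every $p$ because $\hp$ is a power of $p$ times a product of harmonic sums, and the shuffle relations of Theorem \ref{thds}; writing $\mathcal D_n\subseteq\mathcal R_n$ for the span of these double shuffle relations truncated modulo $p^n$, the entire content of the conjecture is the reverse inclusion $\mathcal R_n\subseteq\mathcal D_n$.

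The natural strategy is to pass to the associated graded for the weight filtration and to identify the resulting graded algebra with an algebra of values. Using the relationship between weighted multiple harmonic sums and $p$-adic multiple zeta values from \cite{Jar16,Jar16a,Jar16b}, together with the motivic framework of \cite{Ros16b}, I would identify the graded algebra attached to the span $\rho(V)$ with the corresponding graded algebra of $p$-adic (equivalently, finite) multiple zeta values; under this identification, determining all the $\mathcal R_n$ is the same as determining every $\Q$-linear relation among those values, while the family $\mathcal D_n$ becomes the formal double shuffle ideal. The conjecture then says exactly that the surjection from the formal double shuffle quotient onto the true value algebra is an isomorphism, i.e.\ that double shuffle already accounts for every genuine relation.

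The main obstacle is exactly this isomorphism, which is the finite/$p$-adic analogue of the long-standing open problem that regularized double shuffle generates all $\Q$-linear relations among multiple zeta values. Since even the predicted dimensions (the Broadhurst--Kreimer and Zagier predictions, or their finite counterparts) are themselves conjectural, one cannot hope to finish by a direct dimension count. The route I find most promising is to exploit the Galois action on weighted multiple harmonic sums constructed in \cite{Ros16b}: the double shuffle relations cut out a Lie coalgebra stable under that action, so a Deligne--Ihara-type freeness statement for the relevant motivic Lie algebra, combined with an equality between it and the double shuffle Lie algebra, would force $\mathcal D_n=\mathcal R_n$ for all $n$. Proving that equality of Lie algebras is where essentially all the difficulty lies, and it appears to be as deep as the corresponding questions for motivic multiple zeta values; accordingly I would expect to establish the conjecture unconditionally only in low weight, or modulo small powers of $p$, where $\mathcal R_n$ and $\mathcal D_n$ can be computed explicitly and matched by hand or by computer.
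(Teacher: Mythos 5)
This statement is a \emph{conjecture} in the paper: it is left open, no proof is given, and the surrounding text makes clear that it is expected to be at least as hard as the corresponding completeness questions for multiple zeta values. Your proposal does not prove it either, and you say as much yourself: the entire argument funnels into the assertion that the (finite/$p$-adic) double shuffle relations generate \emph{all} $\Q$-linear relations among the relevant values, equivalently that a certain formal double shuffle quotient maps isomorphically onto the true value algebra. That assertion is precisely the content of the conjecture restated in different language, and it is a well-known open problem in its classical form (regularized double shuffle for multiple zeta values) as well as in the finite/$p$-adic setting. Invoking a ``Deligne--Ihara-type freeness statement'' plus an equality of motivic and double shuffle Lie algebras replaces one open problem with two; neither is established, conditionally or otherwise, in the references you cite. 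So the proposal is a reasonable research program but contains no proof: the gap is the entire key step.

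Two smaller points worth flagging even at the level of a program. First, your passage to the associated graded conflates two different filtrations: the spaces $\mathcal R_n$ are filtered by the modulus $p^n$, not by weight, and a single weighted congruence modulo $p^n$ can mix compositions of several weights (the paper gives explicit inhomogeneous examples such as $33\,\hp(2)\equiv 22\,\hp(2,1)+2\,\hp(2,1,1)\bmod p^4$). The identification of $\mathrm{gr}^n\mathcal R$ with weight-graded relations among finite or $p$-adic multiple zeta values is itself conditional on Conjecture A of \cite{Ros13} and the comparison results of \cite{Jar16,Jar16a,Jar16b,Ros16b}, so it cannot be taken as a free reduction. Second, the relations generated from Theorem \ref{thds} in the paper's algorithm are the identities \eqref{gat1} multiplied by arbitrary $\hp(\bu)$ via the stuffle product and then truncated; if you want $\mathcal D_n$ to match the paper's notion of ``deducible from Theorem \ref{thds},'' it must be defined as this ideal-like span, not merely the span of the truncated identities themselves.
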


Suppose we are given a positive integer $n$, compositions $\bs_1,\ldots,\bs_k$ and coefficients $c_1,\ldots,c_k\in\Q$, and we want to determine whether the weighted congruence
\begin{equation}
\label{wctoprove}
\sum_{i=1}^k c_i\hp(\bs_i)\equiv 0\mod p^n
\end{equation}
holds for all but finitely many $p$. For each triple $\bs$, $\bt$, $\bu$ of compositions, we compute the difference between the left hand side and the right hand side of \eqref{gat1}, multiply the difference by $\hp(\bu)$ (expanding with the stuffle product), and discard those terms whose weight is $n$ or greater to obtain a weighted congruence modulo $p^n$. This is a finite computation for fixed $\bs$, $\bt$, $\bu$, and we may restrict attention to the finite collection of triples for which $|\bs|+|\bt|+|\bu|<n$. We then check whether \eqref{wctoprove} is a $\Q$-linear combination of the weighted congruences thus obtained, which is again a finite computation. If so, then we have found a proof of \eqref{wctoprove}. If not, the truth of Conjecture \ref{conjds} would imply \eqref{wctoprove} fails for infinitely many $p$.

\begin{remark}
In \cite{Ros16b} we describe a variant of this algorithm. The variant involves a formula expressing multiple harmonic sums as infinite, $p$-adically convergent linear combinations of $p$-adic multilpe zeta values. This variant is more efficient in practice, as relations among multiple zeta values have been tabulated. Our software uses the variant.
\end{remark}

\ack
The author thanks Jeffrey Lagarias for many helpful suggestions. The author thanks Shin-Ichiro Seki for pointing out an inaccuracy in the statement of Congruence \ref{propape}.

\bibliographystyle{halpha}
\bibliography{jrbiblio}
\end{document}